\newcommand{\cR}{\mathcal{R}}
\newcommand{\RR}{\mathbb{R}}
\newcommand{\nee}{\mathcal{NE}}
\def\cE{\mathcal{E}}
\def\cS{\mathcal{S}}
\def\noi{\noindent}
\newcommand{\De}{\Delta}
\renewcommand{\phi}{\varphi}
\renewcommand{\leq}{\leqslant}
\renewcommand{\geq}{\geqslant}
\newcommand{\la}{\lambda}
\newcommand{\ga}{\gamma}
\newcommand{\de}{\delta}
\newcommand{\e}{\epsilon}
\newtheorem{obs}{Remark}
\newtheorem{lem}{Lemma}
\newtheorem{cor}{Corollary}
\newtheorem{teo}{Theorem}
\newtheorem{defi}{Definition}
\begin{document}

\title{The Evolutionary Robustness of Forgiveness and Cooperation}

\author{Pedro Dal B\'o\thanks{Department of Economics, Brown University and NBER.} \,\,\,\,\,\,\,\,\,Enrique R. Pujals\thanks{Graduate Center, CUNY and Instituto de Matem\'atica Pura e Aplicada. Pujals gratefully acknowledges the support of NSF via grant DMS-1956022. Contents are solely the responsibility of the authors and do not necessarily represent the official views of the NSF.}}
\doublespacing

 \maketitle

\begin{abstract} We study the evolutionary robustness of strategies in infinitely repeated prisoners' dilemma games in which players make mistakes with a small probability and are patient. The evolutionary process we consider is given by the replicator dynamics. We show that there is a large class of strategies with a uniformly large basin of attraction independent of the finite set of strategies involved. Moreover, we show that those strategies can not be unforgiving and, assuming that they are symmetric, they cooperate. We provide partial efficiency results for asymmetric strategies.\end{abstract}

\section{Introduction}

The theory of infinitely repeated games has been very influential in the social sciences showing how repeated interaction can provide agents with incentives to overcome opportunistic behavior. However, a usual criticism of this theory is that there may be a multiplicity of equilibria. While cooperation can be supported in equilibrium when agents are sufficiently patient, there are also equilibria with no cooperation. Moreover, a variety of different punishments can be used to support cooperation.

To solve this multiplicity problem, we study what types of strategies will have a large basin of attraction regardless of what other strategies are considered in the evolutionary dynamic. More precisely, we study the replicator dynamic over arbitrary finite set of strategies when strategies make mistakes with a small probability in every round of the game. 
 We study which strategies have a non-vanishing basin of attraction with a uniform size regardless of the set of strategies being considered. We say that a strategy has a uniformly large basin of attraction if it repels invasions  of a given size for arbitrarily patient players and small probability of errors and for any possible finite combination of alternative strategies.

We find that two well known strategies, ``Always Defect'' and ``Grim,'' do not have uniformly large basins of attraction.%
\footnote{The strategy Grim starts by cooperating and continues to do so unless there is a defection, in that case it defects for ever.}
Moreover, any strategy that is unforgiving cannot have a uniformly large basin either (we say that a strategy is unforgiving if there is a finite history after which the strategy always defects). The reason is that, as players become arbitrarily patient and the probability of errors becomes small, unforgiving strategies lose in payoffs relative to strategies that forgive and the size of the basins of attraction between these two strategies will favor the forgiving one. This is the case even when the inefficiencies happen in histories that are not reached without trembles (as it is the case for Grim).

We also show that symmetric strategies (their actions depends in what happened and not on who did it) leading to inefficient payoffs cannot have uniformly large basins of attraction.  We also provide some efficiency results for asymmetric strategies. 

It could be the case that inefficient and unforgiving strategies do not have uniformly large basins since there may be no strategies with that property! We prove that that is not the case by showing that the there is a large class of strategies, that we call star-type strategies, which have a uniformly large basin of attraction. The star-type class includes the well known strategy ``Win-stay-lose-shift'' (WSLS), and the family of ``Trigger'' strategies.%
\footnote{The strategy Win-stay-lose-shift starts by cooperating and then cooperates if the actions of the two players coincided in the previous period and defects otherwise. A trigger strategy starts by cooperating and then cooperates if there was no deviations in a given number of previous periods.}
As strategies in this class are efficient, we show that the concept of uniformly large basins of attraction provides a (partial) solution to the long studied problem of equilibrium selection in infinitely repeated games: only efficient equilibria survive for patient players if we focus on symmetric strategies.

Moreover, in our study of the replicator dynamics, we develop tools that can be used outside of the particular case of infinitely repeated games. In fact, the existence results are based on theorem about replicator dynamics which can be used to study the robustness of steady states for games in general.%
\footnote{While we focus on the replicator dynamics, all the results are also valid for any evolutionary dynamic in which strategies earning more than the average grow, and those earning less shrink.} For example, we show that to check that an attractor has a uniformly large basin of attraction against invasion by any finite set of invaders, it is enough to check a condition on payoffs that only considers invasion by pairs (the largest dimension of the simplex that needs to be considered is 3). This greatly simplifies the analysis.

An extensive previous literature has addressed the multiplicity problem in infinitely repeated games. Part of this literature focuses on strategies of finite complexity with costs of complexity to select a subset of equilibria (see Rubinstein 1986, Abreu and Rubinstein 1988, Binmore and Samuelson 1992, Cooper 1996, and Volij 2002). This literature finds that the selection varies with the equilibrium concept being used and the type of cost of complexity. Another literature appealed to ideas of evolutionary stability as a way to select equilibria and found that no strategy is evolutionary stable in the infinitely repeated prisoners' dilemma (Boyd and Lorberbaum 1987). The reason is that for any strategy there exists another strategy that differs only after events that are not reached by this pair of strategies. As such, the payoff from both strategies is equal when playing with each other, and the original strategy cannot be an attractor of an evolutionary dynamic. Bendor and Swistak (1997) circumvent the problem of ties by weakening the stability concept, and show that cooperative and retaliatory strategies are the most robust to invasions. 
Garc\'ia and van Veelen (2016) use an alternative weakening of the stability concept and find that no equilibrium is robust if the players are sufficiently patient. 

In a different approach to ties, Boyd (1989) introduced the idea of errors in decision making. If there is a small probability of errors in every round, then all events in a game occur with positive probability destroying the certainty of ties allowing for some strategies to be evolutionary stable. However, as shown by Boyd (1989) and Kim (1994), many strategies that are subgame perfect for a given level of patience and errors can also be evolutionary stable.

Fudenberg and Maskin (1993) (see also Fudenberg and Maskin 1990) show that evolutionary stability can have equilibrium selection implications if we ask that the size of invasions that the strategy can repel to be uniformly large with respect to any
alternative strategy and for large discount factors and small probabilities of mistakes. They show that the only strategies with this characteristic must be cooperative. There are three main differences with our results. First, Fudenberg and Maskin (1993) focus on strategies of finite complexity while we do not have that restriction. Second, our robustness concept does not only consider the robustness to invasion by a single alternative strategy but also robustness to invasion by any arbitrary finite combination of alternative strategies. In other words, we also look at the size of the basin of attraction inside the simplex. Third, our full efficiency result only applies to the case of symmetric strategies and we only provide partial efficiency results for the general case.

Our results also relate to Johnson, Levine and Pesendorfer (2001), Volij (2002) and Levine and Pesendorfer (2007) who use stochastic stability (Kandori, Mailath and Rob 1993 and Young 1993) to select equilibria in infinitely repeated games. As having large basin of attraction is a necessary condition (but not sufficient) for stochastic stability, the present results could help characterize strategies that are stochastically stable for any finite set of strategies. 

\section{Model and definitions}
\label{sec: model}

We consider a homogeneous population of mass one playing, in each instant $\theta$ in the continuum time of evolution, the infinitely repeated prisoners' dilemma game we define below. Each agent plays one strategy in the infinitely repeated game among a predetermined finite set of possible strategies. The prevalence of each strategy in the population will evolve as function of the payoffs reached by each strategy. In particular we will assume that the dynamics of evolution are given by the replicator dynamic also defined below.

\subsection{Infinitely repeated prisoners' dilemma with trembles}
\label{trembles}

In each instant $\theta$ of the evolutionary time, agents are matched in pairs with each of the other agents to play the following infinitely repeated game. In each period of the infinitely repeated game $t=0,1,2,...$ the two agents play a symmetric stage game with action space $A=\left\{ C,D\right\} $. At each period $t$, a player chooses action $(\hat a^t)\in A$ and the other player chooses action $(\hat b^t)\in A$. However, the chosen action is only implemented with probability $p<1$, and with probability $1-p$ the other action is implemented: the players tremble. Trembles are independent across periods and players.
We denote the vector of own implemented actions until time $t$ as $a_t=(a^0, a^1,\dots, a^t)$ and $b_t=(b^0, b^1,\dots, b^t)$ for the other player. 
The payoff from the stage game at time $t$ is given by utility function $u(a^t, b^t):A\times A\rightarrow \Re $ such that $u(D,C)=T$, $u(C,C)=R$, $u(D,D)=P$,\ $u(C,D)=S$, with $T>R>P> S$ and $2R>T+S$.

Agents only observe previously implemented actions. This knowledge is summarized by public histories.
When the game begins we have the null history $h^{0}$, afterwards $
h_{t}=(a_{t-1}, b_{t-1})=((a^{0}, b^0),\dots (a^{t-1}, b^{t-1}))$ and $H_{t}$ is the space of all
possible $t$ histories. Let $H_{\infty}$ be the set of all possible
infinite histories and $H=\cup_{t\geq 0}H_t$ be the set of all possible finite histories.
A pure public strategy is a function $s: H\rightarrow A$.

In this paper we will have to pay attention to the mirror image of any finite history $h_t=(a_{t-1}, b_{t-1})$, which we define as $ \hat h_t:=(b_{t-1}, a_{t-1}).$ We will also pay attention to the history that strategies would generate if players never tremble: given a pair of strategies $(s,s')$ we denote the history that they generate without trembles as $h_{s,s'}$ and with ${h_{s,s'}}_t$ the finite history up to period $t-1$. Given a finite history $h_t$, with $h_{s,s'/h_t}$ we denote the history that $s$ and $s'$ generate from $h_t$ if players never tremble from then on. We call these histories 0-tremble histories. Note that 0-trembles denote zero future trembles; of courses, trembles may have been necessary to reach the particular history from which the 0-tremble history in consideration originates. Denote by $h^{\tau}_{s,s'/h_t}$ the 0-tremble actions at time $\tau>t$ if players follow strategies $s$ and $s'$ and history $h_t$ has been reached. 

The expected discounted payoff of a player following strategy $s$ while matched with a player following strategy $s'$ is $U_{\de,p}(s,s')= (1-\de)\sum_{t\geq 0} \de^t   p_{s,s'}(a_t,b_t)u(a^t,b^t)$ where $p_{s,s'}(a_t,b_t)$ denotes the probability that the history $(a_t,b_t)$ is reached when $s$ and $s'$ are the strategies used by the agent and the agent she is matched with respectively.
Observe that $ p_{s,s'}(a_t,b_t)=   p_{s,s'}(a_{t-1},b_{t-1}) p^{i_t+j_t} (1-p)^{1-i_t+1-j_t}$
 where   $i_t=1$ if $a^t = s(h_{t})$,  $i_t=0$ otherwise, and  $j_t=1$ if  $b^t = s'(\hat h_{t})$, $j_t=0$ otherwise.
Therefore, $ p_{s,s'}(a_t,b_t)=   p^{m_t+n_t} (1-p)^{2t+2-m_t-n_t}$
where $m_t= \#\{ 0\leq \tau \leq t: s_1(h_\tau)=a^\tau \}$ 
and $n_t= \#\{ 0\leq \tau \leq t: s_2(\hat h_\tau)=b^\tau \}.$
Observe that $p_{s,s'}(h_t)=p^{2t}$ if $h_t$ is a 0-tremble history.
By $U_{\de,p}(s,s' / h_t)$ we denote the expected discounted payoff conditional on history $h_t$ having been reached when $s$ and $s'$ are the strategies used by the agent and the agent she is matched with respectively: $U_{\de,p}(s,s' / h_t) = (1-\de)\sum_{\tau \geq t} \de^{\tau-t}   p_{s,s' / h_t}(a_\tau,b_\tau)u(a^\tau,b^\tau)$ where $p_{s,s' / h_t}(a_\tau,b_\tau)$ is the probability of history $(a_\tau,b_\tau)$ conditional on history $h_t$ having been reached (with $\tau > t$).

\subsection{Replicator dynamics}
\label{sec: replicator}

Since we are interested in studying the evolutionary stability of repeated game strategies, in this section we define the replicator dynamics for the case in which the matrix of payoffs is given by an infinitely repeated prisoners' dilemma game with discount factor $\delta$ and error probability $1-p$ for a finite set of strategies  ${\cal S}=\{s_1,\dots, s_n\}$. See Fudenberg and Levine (1998) and Weibull (1995) for further discussions on replicator dynamics and attractors in general.
  
Given a finite set of strategies ${\cal S}=\{s_1,\dots, s_n\}$, discount factor $\delta$ and error probability $1-p$, we can calculate the expected payoffs from any pair of strategies as discussed in the previous section. Let $U=(u_{ij})_{1\leq i\leq n, 1\leq j\leq n}$ be the square matrix with the expected payoffs for each pair of strategies - we drop $\delta$ and $1-p$ from the notation of $U$ for simplicity. Let $\De$ be the $n-$dimensional simplex  $\Delta=\{x=(x_1\dots x_n)\in \RR^n: x_1+\dots+x_n=1,\,\, x_i\geq 0,\forall i\}$ denoting all the possible distributions of the strategies in the population of agents. 
We consider the replicator dynamics associated to the payoff matrix $U$ on the $n$ dimensional simplex given by the equations $ \dot x_i=x_i[u_i(x)-\bar u (x)]$ where $u_i(x) = \sum_{j=1}^n x_ju_{ij}$ is the expected payoff of strategy $i$, and $\,\,\,\bar u(x)=\sum_{j=1}^n x_ju_j(x)$ is the average expected payoff. 
We denote with $\phi$ the associated flow that provides the solution of the replicator equation: $\phi:\RR\times \De\to \De.$  Observe that any vertex is a singularity of the replicator equation, therefore, any vertex is a fixed point of the flow.
Given a vertex $e_i$ (a $n-$dimensional vector with value $1$ in the $i-$th coordinate and zero in the others) and  $\e>0$, the set $\De_\e(e_i)=\{x: \sum_{j=1, j\neq i}^n x_j <\e\}$  denotes the ball of radius $\e$ and center $e_i.$ 

\begin{defi}{\bf Attracting fixed point and local basin of attraction}. A given vertex $e_i$ is an attractor if there exists an open neighborhood $V$ of $e_i$ such that for any $x\in V$ it follows that $\phi_\theta(x)\to e_i$ as $\theta \to +\infty$. The global basin of attraction of $e_i$, $B^s(e_i)$,  is the set of points with forward trajectories converging to $e_i$.
The local basin of attraction of $e_i$, $B_{loc}^s(e_i),$ is the set of points in $B^s(e_i)$ such that $\frac{\partial |\phi_\theta(x)-e_i|}{\partial \theta}<0$ (the flow gets closer to $e_i$).
\end{defi}

With $B_{loc}(s, \de, p, \cS)$ we denote the local basin of attraction of $s$ in the finite set of strategies ${\cal S}$. 

\subsection{Uniformly large basin of attraction in infinitely repeated  games} \label{replicator IPD}
                                                                                                                                
Since players tremble and the intended action is not public information, we focus on perfect public equilibria (see Fudenberg and Levine 1994):

\begin{defi} \label{sgp}  We say that $(s,s)$ is a strict perfect public equilibrium if there exists $p_0$ and $\de_0$ such that $U_{\de, p}( s,s/h_t)-U_{\de, p}(s',s/h_t)>0$ for any $h_t$, $s'$, $p>p_0$ and $\de>\de_0$.   
\end{defi}

It is clear that if $(s,s)$ is not a strict perfect public equilibrium then, $s$ will not repel invasions by every other strategy. It is also well known that a strict equilibrium strategy is an attractor in any population containing it.
However, the size of the basin of attraction of such a strategy could, in principle, be made arbitrarily small by appropriately choosing the set of alternative strategies, $\de$ and $p$. 
If a strategy can be invaded by one alternative strategy at a time, we could ask that the strategy resists invasions of a given size for any single invading strategy, as done in Fudenberg and Maskin (1990).

\begin{defi}\label{ULBS2} We say that a strategy $s$ has a uniformly large basin of attraction against single invaders if there exist numbers $K$, and $0<\de_0<1$, and an increasing function $p:[\de_0, 1]\to [0,1]$ (with $p(1)=1$) verifying that for any set of two strategies ${\cal S}$ containing $s$, identifying $s$ with the vertex $e_1,$ and any $\de>\de_0$ and $p>p(\de)$, it holds that $\{(x_1, x_2):\,  x_2\leq K\}\subset B_{loc}(s, p, \de, {\cal S}) $.
\end{defi}

The role of the function $p(\delta)$ is to bound the importance of mistakes relative to the patience parameter $\delta$. To avoid notation, we say, in short, that we restrict ourselves to $\de$ and $p$ large.

With only one invading strategy, the size of the basin of attraction of the original strategy depends on the payoff matrix of the $2\times 2$ game formed by these two strategies. If $(s,s)$ is a strict public perfect equilibrium, the size of the basin of attraction of strategy $s$ against strategy $s'$ is $f_{ss'}=\frac{1}{1+\frac{U_{\de,p}(s',s')-U_{\de,p}(s,s')}{U_{\de,p}(s,s)-U_{\de,p}(s',s)}},$ when $U_{\de,p}(s',s')>U_{\de,p}(s,s')$, and $f_{ss'}=1$ when $U_{\de,p}(s',s') \leq U_{\de,p}(s,s')$. Note that the size of the invasion by strategy $s'$ that strategy $s$ can resist is decreasing in the cost of miscoordinating when the other plays strategy $s'$, and increasing in the cost of miscoordinating when the other plays strategy $s$.

Therefore, a strategy $s$ has a uniformly large basin against single invaders if there exists a constant number $L$ such that for any $s'\neq s$ it follows that 
\begin{eqnarray}\label{eq 2 a 2}
\frac{U_{\de,p}(s', s')-U_{\de,p}(s, s')}{U_{\de,p}(s, s)-U_{\de,p}(s',s)}< L,
\end{eqnarray} 
for $\delta$ and $p$ large.%
\footnote{If we focus on the case without trembles, $p=1$, then the concept of uniformly large basin of attraction against single invaders coincides with the concept of uniform invasion barriers -- see Weibull (1995).}

However, strategies may not invade one at a time, and to study the robustness of a strategy we need to consider invasions by any combination of alternative strategies. In other words, we need to consider the size of the basin of attraction also inside the simplex, not only on its boundaries. Moreover, note that there is an infinite and uncountable number of alternative strategies in infinitely repeated games. This results in a large number of possible sets of invading strategies. To capture the idea of evolutionary robustness, in this paper we ask that a strategy has a large basin of attraction independently of the finite set of other participating strategies. That is, we ask that the strategy repels invasions of a given size for any possible set of invading strategies.
 
\begin{defi}\label{ULBS} We say that a strategy $s$ has a  uniformly large basin if there exist numbers $K$, and $0<\de_0<1$, and an increasing function $p:[\de_0, 1]\to [0,1]$ (with $p(1)=1$)  verifying that for any finite set of strategies ${\cal S}$ containing $s$, identifying $s$ with the vertex $e_1,$ and any $\de>\de_0$ and $p>p(\de)$, it holds that $\{(x_1, \dots, x_n):\,  x_2+\dots +x_n\leq K\}\subset B_{loc}(s, p, \de, {\cal S}) $ where $n=\#(\cS)$.
\end{defi}

It is important to realize that the robustness to invasion by combination of strategies may be quite different to robustness to invasion by single strategies. While it is clear that if $s$ has a uniformly large basin then it also has a uniformly large basin against single invaders, the converse may not be true. It could well be that the border of the basin of attraction bends inward in the interior of the simplex, depending on the payoffs that the invading strategies earn while interacting with each other. As such, calculating the size of the basin of attraction of a strategy would involve considering all the possible combinations of invading strategies, what could be quite consuming if the number of possible strategies is large. We show in Section \ref{large basin} that a simpler approach is possible under replicator dynamics. But first, in the next two sections, we provide a series of results that do not require that we travel to the interior of the simplex.

\section{The cost of unforgiveness}
\label{sec: unforgiveness}
One of the goals of this paper is to characterize the strategies that are evolutionary robust as captured by having uniformly large basins of attraction. We start by showing that unforgiving strategies are not evolutionary robust.

\begin{defi} We say that a strategy $s$ is unforgiving if there exists a history $h_t$ such that $s(h_{t}h_{\tau})=D$ for any history $h_{t}h_{\tau}$ that comes after history $h_t$.
\end{defi}

A commonly discussed unforgiving strategy is Always Defect ($a$). In games without trembles, Myerson (1991) proved that the basin of attraction of Always Defect collapses as the discount factor converges to one. The reason is that when Always Defect meets a cooperative strategy like Grim, the relative cost of miscoordination favors the cooperative strategy as the agents become more patient. Grim risks a low payoff in one period to secure high payoffs in all future periods if matched with another player playing Grim, and the relative value of that one period cost decreases as agents become more patient. Without trembles the size of the basin of attraction of Always Defect (a) against Grim (g) is $ f_{ag}=\frac{1}{1+\frac{U_{\de,p=1}(g,g)-U_{\de,p=1}(a,g)}{U_{\de,p=1}(a,a)-U_{\de,p=1}(g,a)}}=\frac{1}{1+\frac{R-(1-\delta)T-\delta P}{P-(1-\delta)S-\delta P}}$ which converges to zero as $\delta$ goes to one. The next lemma shows that this is also the case with small trembles.

\begin{lem}\label{lem: AD no se la banca} Always Defect ($a$) does not have a uniformly large basin of attraction. \end{lem}

\begin{proof} To prove that Always Defect does not have a uniformly large basin of attraction, we need to show that its basin of attraction can be made arbitrarily small for some large $\delta$ and $p$. Consider the invading strategy Grim, then the size of the basin of attraction of Always Defects is $\frac{1}{1+\frac{U_{\de,p}(g,g)-U_{\de,p}(a,g)}{U_{\de,p}(a,a)-U_{\de,p}(g,a)}}$. That is, Always Defect does not have a uniformly large basin of attraction if the miscoordination cost ratio, $\frac{U_{\de,p}(g,g)-U_{\de,p}(a,g)}{U_{\de,p}(a,a)-U_{\de,p}(g,a)}$, can be made arbitrarily large for large $\delta$ and $p$. Note that the miscoordination cost ratio converges to $\frac{U_{\de,p=1}(g,g)-U_{\de,p=1}(a,g)}{U_{\de,p=1}(a,a)-U_{\de,p=1}(g,a)}=\frac{R-(1-\delta)T-\delta P}{P-(1-\delta)S-\delta P}$ as $p$ goes to one. As in Myerson (1991), this ratio can be made arbitrarily large by choosing $\delta$ close to one. Hence, the basin of attraction of Always Defect can be made arbitrarily small.
\end{proof}

While it may not be surprising that Always Defect is not evolutionary robust, given that it obtains low payoffs as it defects from the beginning, we show next that the same applies to any unforgiving strategy - even when the history that triggers defection for ever may only be reached through trembles.

\begin{teo}\label{teo: unforgiving} Unforgiving strategies do not have a uniformly large basin of attraction. 
\end{teo}

As it will be clear from the proof, the reason that an unforgiving strategies does not have a uniformly large basin of attraction is that there exist an alternative strategy which would be willing to forgive and hence reach higher payoffs when playing with itself. To be more precise, if the unforgiving strategy $s$ defects for ever starting in history $h_t$, there is a strategy $s'$ that only differs from $s$ in that, starting in histories $h_t$ and its mirror history $\hat h_t$, $s'$ will cooperate for ever unless there is a defection. The cooperation by $s'$ at $h_t$ and $\hat h_t$  works as a ``secret handshake" (Robson 1990) and leads to persistent cooperation with itself and higher payoffs that those reached by the unforgiving strategy. This makes the basin of attraction of $s$ arbitrarily small.%
\footnote{Note that while the ``secret handshake" in Robson (1990) is costless, in our case it results in a one-period cost which becomes arbitrarily small as $\de$ goes to one.}

\begin{proof}
To prove that an unforgiving strategy $s$ does not have a uniformly large basin of attraction, we find an alternative strategy such that the basin of attraction of $s$ is arbitrary small against this alternative strategy for large $\de$ and $p$.

Since $s$ is unforgiving, there exists a history $h_t$ such that $s(h_{t}h_{\tau})=D$ for any $h_\tau.$ We consider first the case in which $h_t$ is a symmetric history ($h_t=\hat{h}_t$). Consider an invader strategy, $s'$ which behaves like $s$ in every history but history $h_t$ and those following it. In those histories, $s'$ behaves as a Grim strategy that disregards what happened before $h_t$ (that is, $s(h_t)=C$ and will cooperate in every history $h_th_\tau$ if, and only if, $h_\tau$ does not include a defection).

To prove that $s$ does not have a uniformly large basin of attraction, we must show that the ratio of misscoordination costs $\frac{U_{\de,p}(s', s')-U_{\de,p}(s, s')}{U_{\de,p}(s, s)-U_{\de,p}(s',s)}$ can be made arbitrarily large for $\de$ and $p$ close to one. Note that, given the definition of $s'$, the two strategies only differ on $h_t$ and following histories. Hence, $U_{\de,p}(s', s')-U_{\de,p}(s, s')= p_{s,s}( h_t)\delta^t(\,U_{\de,p}(s',s'/ h_t)-U_{\de,p}(s,s'/ h_t))$ and $U_{\de,p}(s, s)-U_{\de,p}(s',s)= p_{s,s}( h_t)\delta^t(\,U_{\de,p}(s,s'/ h_t)-U_{\de,p}(s',s/ h_t))$, where $p_{s,s}( h_t)$ is the probability that history $h_t$ is reached when both players follow strategy $s$ (note that, given the construction of $s'$, history $h_t$ is reached with the same probability if one or both players follow strategy $s'$). Therefore, the ratio of misscoordination costs is $\frac{U_{\de,p}(s', s'/ h_t)-U_{\de,p}(s, s'/ h_t)}{U_{\de,p}(s, s/ h_t)-U_{\de,p}(s',s/ h_t)}$. Note that, from history $h_t$ onwards, strategy $s$ is identical to Always Defect and strategy $s'$ is identical to Grim. Therefore, the ratio of miscoordination costs is equal to the ratio of miscoordination cost of Always Defect versus Grim: $\frac{U_{\de,p}(s', s'/ h_t)-U_{\de,p}(s, s'/ h_t)}{U_{\de,p}(s, s/ h_t)-U_{\de,p}(s',s/ h_t)} = \frac{U_{\de,p}(g,g)-U_{\de,p}(a,g)}{U_{\de,p}(a,a)-U_{\de,p}(g,a)}$. From the proof of Lemma \ref{lem: AD no se la banca} we know that this ratio can be made arbitrarily large as Always Defect does not have a large basin of attraction against Grim when $\de$ and $p$ are close to one.

We consider now the case in which $h_t$ is not a symmetric history ($h_t\neq \hat{h}_t$). If $s(\hat{h}_th_\tau)=C$ for some $h_\tau$, then the strategy $s$ is not a perfect public equilibrium, as it would cooperate against a strategy that is defecting for ever, and hence it has no basin of attraction against an alternative strategy that defects for ever after $\hat{h}_t$. We focus, then, on the case in which $s(\hat{h}_th_\tau)=D$ for every $h_\tau$. Consider an invader strategy, $s'$ which behaves like $s$ in every history but the histories $h_t$ and $\hat{h}_t$, and those following them. In those histories, $s'$ behaves as a Grim strategy that disregards what happened before $h_t$ or $\hat{h}_t$ (that is, $s(h_t)=s(\hat{h}_t)=C$ and will cooperate in every history $h_th_\tau$ or $\hat{h}_th_\tau$ if, and only if, $h_\tau$ does not include a defection).

Since strategies $s$ and $s'$ only differ in histories $h_t$ and $\hat{h}_t$ and following ones, the ratio of misscoordination costs is:
\begin{eqnarray*}
& &\frac{U_{\de,p}(s', s')-U_{\de,p}(s, s')}{U_{\de,p}(s, s)-U_{\de,p}(s',s)}= \\
& &\frac{p_{s,s}( h_t)\delta^t(\,U_{\de,p}(s',s'/ h_t)-U_{\de,p}(s,s'/ h_t))+p_{s,s}(\hat{h}_t)\delta^t(\,U_{\de,p}(s',s'/ \hat{h}_t)-U_{\de,p}(s,s'/ \hat{h}_t))}{p_{s,s}( h_t)\delta^t(\,U_{\de,p}(s,s/ h_t)-U_{\de,p}(s',s/ h_t))+p_{s,s}(\hat{h}_t)\delta^t(\,U_{\de,p}(s,s/ \hat{h}_t)-U_{\de,p}(s',s/ \hat{h}_t))}.
\end{eqnarray*}

Given that, from histories $h_t$ and $\hat{h}_t$ onwards, strategy $s$ is identical to Always Defect and strategy $s'$ is identical to Grim, $U_{\de,p}(s',s'/ h_t)=U_{\de,p}(s',s'/ \hat{h}_t)=U_{\de,p}(g,g)$, $U_{\de,p}(s,s'/ h_t)=U_{\de,p}(s,s'/ \hat{h}_t)=U_{\de,p}(a,g)$, $U_{\de,p}(s',s/ h_t)=U_{\de,p}(s',s/ \hat{h}_t)=U_{\de,p}(g,a)$, and $U_{\de,p}(s,s/ h_t)=U_{\de,p}(s,s/ \hat{h}_t)=U_{\de,p}(a,a)$. 
Note that $p_{s,s}(h_t)=p_{s,s}(\hat{h}_t)$. Therefore, the ratio of misscoordination costs is equal to the ratio from Always Defect against Grim:
\begin{eqnarray}
\frac{U_{\de,p}(s', s')-U_{\de,p}(s, s')}{U_{\de,p}(s, s)-U_{\de,p}(s',s)} = \frac{U_{\de,p}(g,g)-U_{\de,p}(a,g)}{U_{\de,p}(a,a)-U_{\de,p}(g,a)},
\end{eqnarray}
which, from the proof of Lemma \ref{lem: AD no se la banca}, we know it can be arbitrarily large for $\de$ and $p$ close to one.
\end{proof}

While Grim plays an important role as the alternative strategy in the proof of Theorem \ref{teo: unforgiving}, it is itself an unforgiving strategy. As such, it cannot have a uniformly large basin of attraction. 

Unforgiving strategies are a extreme case of inefficient strategies. In the next section we study the connection between the efficiency of a strategy and the size of its basin of attraction.

\section{Efficiency and size of the basin of attraction}\label{sec: efficiency}

Given a history $h_t$, and a pair of strategies $s, s'$ we define $$U(s,s'/ h_t)=\lim_{\de\to 1} \lim_{p\to 1} U_{\de,p}(s,s'/h_t).$$

\begin{defi} We say that a strategy $s$ is asymptotically efficient if $U(s,s/ h_t)=R$ for any history $h_t.$
\end{defi}

In the next section we prove that symmetric strategies with a uniformly large basin of attraction are asymptotically efficient. In Section \ref{wef-sec} we study non-symmetric strategies and show that for strategies with a uniformly large basin of attraction there is a relation between the ``degree of symmetry'' and payoffs.

\subsection{Symmetric strategies and efficiency}\label{symm sec}

\begin{defi}\label{simetry} We say that a strategy $s$ is symmetric if $s(h_t)=s(\hat h_t)$ for any history $h_t.$
\end{defi}

If the strategy $s$ is symmetric, the pair $(s,s)$ would be a strongly symmetric profile as it is usually defined in the literature (see Fudenberg and Tirole 1991).%
\footnote{We drop the use of the word ``strongly'' for simplicity and given that we use ``symmetry'' to refer to the strategy and not the profile of strategies.} Commonly discussed strategies like Always Defect, Always Cooperate, Grim and Win-Stay-Lose-Shift are symmetric. Tit-for-tat is not symmetric.

\begin{teo}\label{teo: eficiency2} If $s$ has a uniformly large basin of attraction and is symmetric, then it is asymptotically efficient.
\end{teo}

The proof of this theorem uses the large number of alternative strategies present in infinitely repeated games, which was a usual hurdle for the study of evolutionary stability in infinitely repeated games. We construct a sequence of alternative strategies against which an inefficient and symmetric strategy cannot have a uniformly large basin of attraction. For a strategy $s$ to have a uniformly large basin of attraction, it must be that the ratio of cost of miscoordination $\frac{U_{\delta p}(s',s')-U_{\delta p}(s,s') }{U_{\delta p}(s,s)
-U_{\delta p}(s',s)}$ is uniformly bounded for large $\delta$ and $p$ for any alternative strategy $s'$. We construct the alternative strategy $s'$ by making it cooperate forever against itself starting from history $h_{t}$ and its mirror history $\hat h_{t}$ in which $s$ is inefficient, but $s'$ imitates $s$ in other histories (cooperation at $h_t$ by $s'$ works as a ``secrete handshake" that secures future cooperation when matched with itself). Then, the ratio of miscoordination costs that must be bounded is $\frac{U_{\delta p}(s',s'|h_{t}) +U_{\delta p}(s',s'|\hat{h}_{t}) -U_{\delta p}(s,s'|h_{t}) -U_{\delta p}(s,s'|\hat{h}_{t})}{U_{\delta
p}(s,s|h_{t}) +U_{\delta p}(s,s|\hat{h}_{t}) -U_{\delta p}(s',s|h_{t}) -U_{\delta p}(s',s|\hat{h}_{t}) }$. The difference in payoffs must include the history $\hat{h}_{t}$ as $s'$ must differ from $s$ also in $\hat {h}_{t}$. The symmetry of $s$ allows us to prove that $U(s,s^{\prime }|h_{t})+U(s,s'|\hat{h}_{t})=U(s',s|h_{t})+U(s',s|\hat{h}_{t})$. For the ratio of miscoordination costs to be bounded, we must have that the subtracting terms must be even lower than the inefficient payoffs of $s$ at histories $h_{t}
$ and $\hat{h}_{t}$. Since $s'$ imitates $s$ outside of $h_{t}$, this implies that there exists another history $h_{t}^{\prime }$ in which $s$ obtains even lower payoffs than in the original history. Repeating the previous reasoning across a sequence of histories and alternative strategies, we find that $s$ should be increasingly inefficient up to an impossibly low continuation payoff, reaching a contradiction.

{\em Proof of Theorem \ref{teo: eficiency2}:} Assume that there exists a history $h_t$ and a scalar $\la_0<1$ such that $U(s,s/h_t)= \la_0R$. We consider first the case when $h_t$ is not symmetric: $h_t \neq \hat h_t$. Then we show how to deal with the symmetric case using the asymmetric one.

From the fact that $s$ is symmetric, it follows that $U(s,s/h_t)=U(s,s/\hat h_t)$ and, hence, $U(s,s/h_t)+U(s,s/\hat h_t) = 2\la_0R.$
Moreover, since $U(s,s/h_t)< R$ and $s$ is symmetric, we can assume without loss of generality that $s(h_t)=D.$

We chose a strategy  $s'$ such that \emph{i.} $s'(h_t)=s'(\hat h_t)=C$ (the ``secret handshake"); \emph{ii.} $s'$ plays $C$ for ever after $h_t(C,C)$ and $\hat h_t(C,C)$ (responding to the secret handshake); and \emph{iii.} $s'$ imitates $s$ in all other histories.

For the strategy $s$ to have a uniformly large basin of attraction, it must be that the ratio of cost of miscoordination $\frac{U_{\delta p}(s',s')-U_{\delta p}(s,s') }{U_{\delta p}(s,s)-U_{\delta p}(s',s)}$ is uniformly bounded for large $\delta$ and $p$ by, say, a bound equal to $C_0$.
Given the characteristics of $s'$, this ratio of miscoordination is equal to $\frac{U_{\delta p}(s',s'|h_{t}) +U_{\delta p}(s',s'|\hat{h}_{t}) -U_{\delta p}(s,s'|h_{t}) -U_{\delta p}(s,s'|\hat{h}_{t})}{U_{\delta p}(s,s|h_{t}) +U_{\delta p}(s,s|\hat{h}_{t}) -U_{\delta p}(s',s|h_{t}) -U_{\delta p}(s',s|\hat{h}_{t})}.$
The difference in payoffs includes the history $\hat{h}_{t}$ as $s'$ differs from $s$ also in $\hat {h}_{t}$.
Given that both the numerator and denominator are finite, and the denominator must be bounded away from zero (otherwise $s$ would not have a uniformly large basin of attraction and the proof ends), the limit of the ratio must be equal to the ratio of the limits and we focus on the latter in what follows.

Given that $s'$ plays $C$ for ever after $h_t(C,C)$ and $\hat h_t(C,C)$, if follows that $h_{s',s'/h_t}=h_{s',s'/\hat h_t}=(C,C).. (C,C)..,$ and $U(s',s'/h_t)=U(s',s'/\hat h_t)=R.$
From the definition of $s'$ and the symmetry of $s$, it follows that $h_{s,s'/h_t(D,C)}= h_{s',s/\hat h_t(C,D)}$ and $h_{s',s/h_t(C,D)}= h_{s,s'/\hat h_t(D,C)}$. Therefore, $U(s,s'/h_t)=U(s',s/\hat h_t)$ and $U(s',s/h_t)=U(s,s'/\hat h_t)$ (see Figure \ref{teo2 fig}). These equalities imply that $U(s',s/h_t)+U(s',s/\hat h_t)=U(s,s'/h_t)+U(s,s'/\hat h_t).$
Since $(s,s)$ is a strict perfect public equilibrium (otherwise it would not have a uniform large basin of attraction), it follows that $U(s', s/h_t)+U(s', s/\hat h_t)< 2\la_0 R$ and therefore $U(s, s'/h_t)+U(s, s'/\hat h_t)< 2\la_0 R$.

\begin{figure}[!htbpp]
\begin{center}
\includegraphics[width=.7\textwidth]{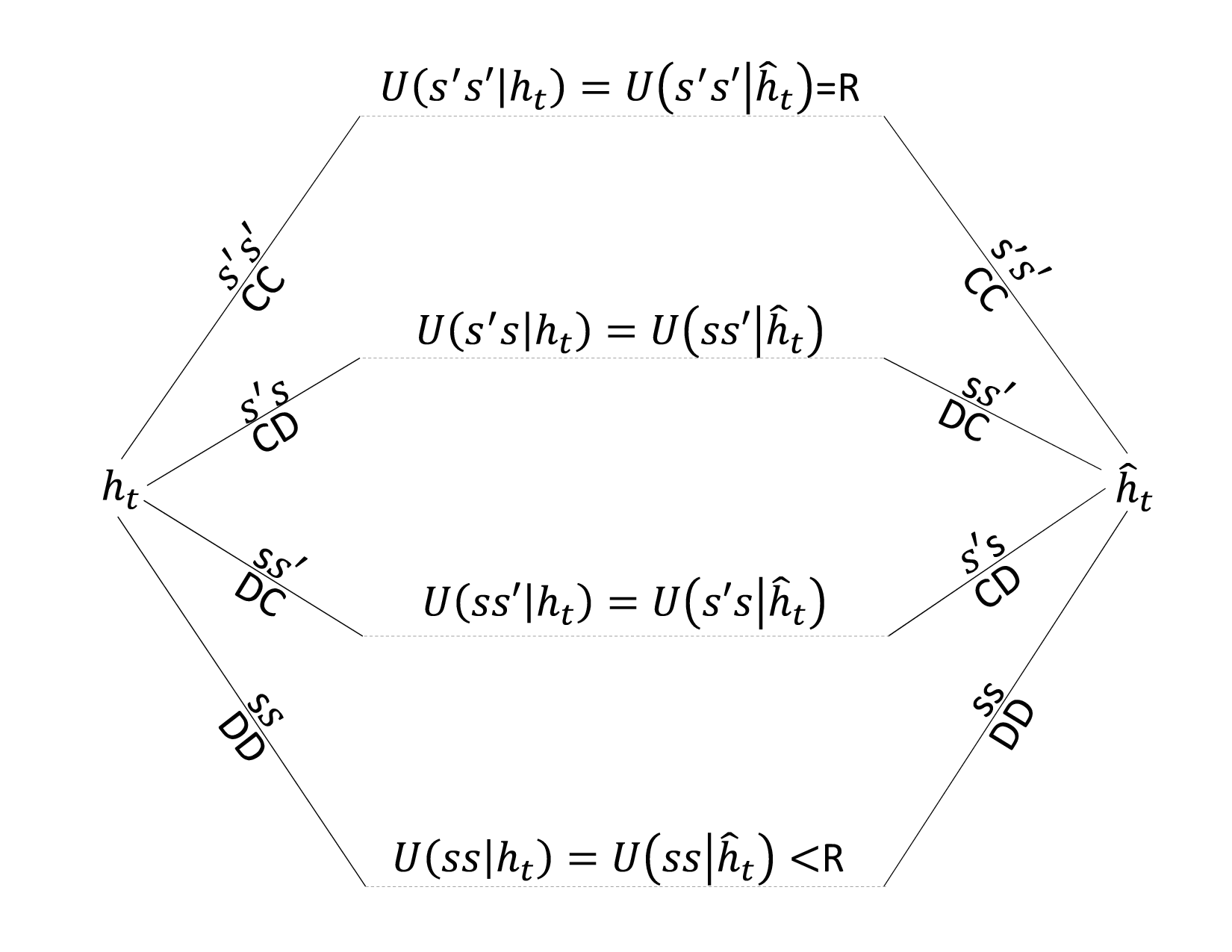}
 \end{center}
\caption{``Secret handshake" and continuation payoffs}
\label{teo2 fig}
\end{figure}

If we denote $U(s',s/h_t)+U(s',s/\hat h_t)=2\la_{1}' R,$ then $\frac{1-\la_{1}'}{\la_0-\la_{1}'}< C_0$ for $s$ to have a uniformly large basin of attraction, and taking a positive constant $C_1< 1-\la_{1}'$ it follows that $\la_{1}'$ satisfies $\frac{C_1}{\la_0-\la_{1}'}< C_0.$ Therefore, there exists $\ga>0$ such that $\la_{1}'< \la_0-\ga.$ 
Now, of the histories $h_t(C,D)$ and $\hat h_t(C,D)$ we take the one with the lowest continuation payoff and denote that history as $h_{t_2}$. We define the number $\la_1$ such that $U(s,s'|h_{t_2})=\la_1 R$. By the choice of $h_{t_2}$, we have that $\la_1 \leq \la_1'$. As before we construct a new strategy $s_2'$ that satisfies the same type of properties as the one satisfied by $s'$ respect to $s$ but on the path $h_{t_2}$ instead of the path $h_{t}.$ Inductively, we construct a sequence of paths $h_{t_i}$, strategies $s_i'$ and constants $\la_i$ and $\la_i'$ such that $U(s,s/h_{t_i})=\la_{i-1}R$, $U(s'_i,s/h_{t_i})+U(s'_i,s/\hat {h}_{t_i})=2\la_i' R$, $\la_i \leq \la_i'$. Arguing as before, it follows that $ \frac{1-\la_{i+1}'}{\la_i-\la_{i+1}'}< C_0,$ and, hence, $\la_{i+1}\leq \la_{i+1}'< \la_i-\ga$. This implies that $\la_{i+1}< \la_1-i\ga$ and $\la_i\to -\infty$. Hence, $ U(s,s/ h_{t_i})\to -\infty$, which is a contradiction because utilities are bounded below by $S.$

To finish, we have to deal with the case that $h_t$ is symmetric and   $U(s,s/ h_t) < R $. Recall that  we can assume that $s(h_t)=s(\hat h_t)=D$. Now, let us consider the history $h_t(C,D)$. We claim that if $U(s,s/ h_t) < R $ then $U(s,s/ h_t(C,D))< R.$
In fact, we can consider the strategy $s'$ such that only differs on $h_t$ and after that plays the same as $s$ plays. Since $(s,s)$ is a strict perfect public equilibrium (otherwise it would not have a uniform large basin of attraction), it follows that $U_{\de, p}(s,s/h_t)>U_{\de, p}(s',s/h_t)$. Therefore, 
$U(s,s/h_t)=\lim_{\de\to 1}\lim_{p\to 1}U_{\de, p}(s,s/h_t)\geq  \lim_{\de\to 1}\lim_{p\to 1}U_{\de, p}(s',s/h_t),$ and, given that 
$\lim_{\de\to 1}\lim_{p\to 1}U_{\de, p}(s',s/h_t)=\lim_{\de\to 1}\lim_{p\to 1}U_{\de, p}(s,s/h_t(C,D))=U(s,s/h_t(C,D))$, the claim follows.  Observe that the new path $h_t(C,D)$ is not symmetric and since the payoff along that path satisfies $U(s,s/ h_t(C,D))< R$, and we argue as above to conclude the proof of Theorem \ref{teo: eficiency2}.
\qed

\subsection{Non-symmetric strategies and efficiency}\label{wef-sec} 
 
The fact that $U_{\delta p}(s,s'|h_{t}) +U_{\delta p}(s,s'|\hat{h}_{t}) =U_{\delta p}(s',s|h_{t}) +U_{\delta p}(s',s|\hat{h}_{t})$ when $s$ is symmetric played a crucial role in the proof of efficiency of symmetric strategies with uniformly large basin of attraction. When $s$ is not symmetric, $U_{\delta p}(s,s'|h_{t}) +U_{\delta p}(s,s'|\hat{h}_{t})$ does not have to equal $U_{\delta p}(s',s|h_{t}) +U_{\delta p}(s',s|\hat{h}_{t})$ and the proof from the previous section cannot be used. Fortunately, we can bound the difference between these two sums of payoffs as a function of the asymmetry of $s.$ This, in turn, allows us to bound from below the payoffs of strategies with a uniformly large basin of attraction as a function of their degree of asymmetry.

Define $u^{\tau}(s,s/h_t)$ as the payoff that a player obtains at time $\tau$ in the 0-tremble history that starts at $h_t$ when both player folow strategy $s$. The next definition is related to the asymmetry of a strategy. In few words, it measures how frequently it happens that $s(h_t)\neq s(\hat h_t)$ along histories.

\begin{defi} A strategy $s$ is $c-$asymmetric if $$c=\lim_{\de\to 1} \sup_{h_{t}} (1-\delta)\left( \sum_{\tau:u^{\tau}(s,s/h_t)=T}\de^{\tau-t}+ \sum_{\tau:u^{\tau}(s,s/h_t)=S}\de^{\tau-t}\right).$$
\end{defi}
Note that a symmetric strategy is 0-asymmetric.

\begin{teo}\label{teowef-1} If $s$ has a uniformly large basin of attraction and is $c-$asymmetric, then 
$U(s,s/h_t)+U(s,s/\hat h_t)\geq 2R-2c(T-S)$ for any $h_{t}.$
\end{teo}

The proof of this theorem is presented in Section \ref{proofs:non-symmetric} in the Appendix.
In this section and the following ones, we relegate proofs to the Appendix as they usually involve several intermediate steps and are more involved than the proofs in the previous sections.

\begin{cor}If $s$ has a uniformly large basin of attraction and is $c-$asymmetric, then 
$U(s,s/h_t)\geq R-c(T-S)$ for any $h_{t}=\hat h_{t}.$
\end{cor}

This corollary is of particular importance as it applies to $h_0,$ bounding the equilibrium payoffs from the beginning of the repeated game for any strategy with a uniformly large basin of attraction.
When strategy $s$ has a uniformly large basin of attraction, there is a minimum bound on the payoffs from $(s,s)$ which is increasing in the degree of symmetry of that strategy.

Note that this result does not imply that it is possible to have inefficient strategies with a uniformly large basin of attraction. It may be the case that our lower bound to payoffs is not tight. Future work should either provide and example of an inefficient and asymmetric strategy with a uniformly large basin of attraction, or show that strategies with a uniformly large basin of attraction must be efficient regardless of symmetry.

\section{Sufficient conditions for a uniformly large basin}
\label{large basin}

In this section we provide general sufficient conditions for a strategy to have a uniformly large basin of attraction.

First, based on properties of replicator dynamics, we show that if a strategy satisfies a condition involving all possible \emph{pairs} of invading strategies, then it has a uniformly large basin of attraction. As this condition involves working with three strategies at a time, we call it the \emph{trifecta} condition. Second, we provide a even simpler condition for a strategy to have a uniformly large basin of attraction which requires only working with two strategies at a time. We call this condition the \emph{cross ratio} condition.

\subsection{The trifecta condition}

Let $s$ be a strict perfect public equilibrium strategy for $\de$ and $p$ large. Given $s'$ and $s^*$ with $U_{\de,p}(s,s)-U_{\de,p}(s^*,s)\geq U_{\de,p}(s,s)-U_{\de,p}(s',s)$, we define the following number
\begin{eqnarray}\label{Ms}
 M_{\de,p}(s,s^*,s'):=\frac{U_{\de,p}(s',s^*)-U_{\de,p}(s, s^*)+U_{\de,p}(s^*,s')-U_{\de,p}(s, s'),}{U_{\de,p}(s,s)-U_{\de,p}(s^*,s)}.
\end{eqnarray}
We consider the supremum of $M_{\de,p}(s,s^*,s')$ for all $s', s^*$:
\begin{eqnarray}\label{M0}
M_{\de,p}(s):=\sup_{ U_{\de,p}(s,s)-U_{\de,p}(s^*,s)\geq U_{\de,p}(s,s)-U_{\de,p}(s',s)} \{M_{\de,p}(s,s^*,s'), \,\,\, 0 \}.
\end{eqnarray}
 Taking the limit on $\de$ and $p$, we  also define
\begin{eqnarray}\label{M1}
M(s):=\limsup_{{\de}\to 1}\limsup_{{p}\to 1}M_{\de,p}(s).
 \end{eqnarray}
Note that if $M(s)$ is finite, then there exist $M_0, \de_0>0$ and an increasing function function $p:[\de, 1]\to [0,1]$ such that $M_{\de, p}(s)< M_0$ for $\de>\de_0$ and $p> p(\de).$

\begin{defi}\label{ULBS cond}
We say that a strategy $s$ satisfies the ``trifecta condition'' if $M(s)<\infty.$
\end{defi}

\begin{teo}\label{ULBA for games} If $s$ satisfies the trifecta condition and $(s,s)$ is a strict perfect public equilibrium strategy, then $s$ has a uniformly large basin of attraction.
\end{teo}

The proof of this theorem is presented in Section \ref{proofs:sufficientconditions} in the Appendix.

The reason that we only need to study all possible combinations of two invading strategies is related to the fact that the replicator dynamic is monotone with respect to the difference between the average payoff of a fixed strategy $s$ against any strategy in a finite population and the average payoff in that population. Both quantities are given by the average payoff of the fixed strategy against any other one and the average payoff between any other pair of strategies. Using the linearity of the average, everything is reduced to comparing payoffs involving $s$ and any other pair in that population. In particular, if it is possible to uniformly bound the relation between a fixed strategy and any other pair, it is possible to bound the averages. 

\subsection{The cross ratio condition}

In the previous section we showed that it is enough to check a condition involving invasion by all possible combinations of two strategies to verify that a strategy has a uniformly large basin of attraction. In this section we provide a further simplification by showing that it is enough to check a condition involving invasions by single strategies under certain conditions. This greatly reduces the complexity of verifying that a strategy has a uniformly large basin of attraction.

\begin{defi} \label{sgpstrict}  We say that $(s,s)$ is a uniformly strict perfect public equilibrium if there exist $0< \de_0<1$, an increasing continuous function $p:[\de_0, 1]\to [0,1]$ (with $p(1)=1$) and a positive constant $C_0$  that only depends on $T, R, P, S$  such that $U_{\de, p}(s,s/h_t)-U_{\de, p}(s',s/h_t)> C_0(1-p^2\de),$ for any $h_t$ such that $s(h_t)\neq s'(h_t)$, any strategy $s'$, any $\de>\de_0$ and $p>p(\de).$ 
\end{defi}
In short, we will not only request that $(s,s)$ is a strict perfect public equilibrium, but that the difference $U_{\de, p}(s,s/h_t)-U_{\de, p}(s',s/h_t)$ can not be smaller than a decreasing function of the discount factor. 

\begin{defi} \label{effstrict}  We say that $(s,s)$ is uniformly efficient if there exist $0< \de_0<1$, an increasing continuous function $p:[\de_0, 1]\to [0,1]$ (with $p(1)=1$) and a positive constant $C_1$  that only depends on $T, R, P, S$ such that $R- U_{\de, p}(s,s/h_t)< C_1(1-p^2\de),$ for any $h_t$, $\de>\de_0$ and $p>p(\de).$
\end{defi}

In short, we say that $(s,s)$ is an {\it uniformly efficient strict perfect public equilibrium} if it satisfies the previous two definitions.

We introduce next a technical condition that only involves two strategies, and therefore is easier to check than the trifecta condition. As we will see, this technical condition, which we call the ``cross ratio condition,'' implies the trifecta condition under some additional assumptions.

\begin{defi} \label{***} We say that $s$ satisfies the cross ratio condition if there exist $0< \de_0<1$, an increasing continuous function $p:[\de_0, 1]\to [0,1]$ (with $p(1)=1$) and a positive constant  $C_2$ that only depends on $T, R, P, S,$ such that   
\begin{eqnarray}\label{totalpath}
 \frac{U_{\de,p}(s,s)-U_{\de,p}(s,s')}{U_{\de,p}(s,s)-U_{\de,p}(s',s)}< C_2,
\end{eqnarray}
for any $\de>\de_0$ and $p>p(\de)$ and any  strategy $s'$.
\end{defi}
The cross ratio condition relates how $s'$ performs against $s$ with how $s$ performs against $s'.$
Put simply, this condition says that $U_{\de,p}(s,s')$ cannot be much below $U_{\de,p}(s,s)$ if $U_{\de,p}(s',s)$ is close  to $U_{\de,p}(s,s)$.
In other words, this conditions puts a lower bound to how well stsrategy $s$ does against any other strategy $s'$ as a function of how well $s'$ does against $s$.%
\footnote{Observe that the cross ratio condition resembles the condition required for $s$ to have a uniformly large basin of attraction agains an invasion by a single strategy $s'$ (see condition (\ref{eq 2 a 2})). Note, however that the numerators are different: it is $U_{\de,p}(s',s')-U_{\de,p}(s,s')$ in the latter, and $U_{\de,p}(s,s)-U_{\de,p}(s,s')$ in the former.}

We show next that, if a strategy is a uniformly efficient strict perfect public equilibrium and satisfies the cross ratio condition, then it has a uniformly large basin of attraction.

\begin{teo}\label{2 implica ULBC}If $s$ satisfies the cross ratio condition and $(s,s)$ is a uniformly efficient strict perfect public equilibrium, then $s$ has a uniformly large basin.
\end{teo}

While it is intuitive that an efficient strategy that earns high payoffs when playing against any other strategy relative to the payoff that any other strategy earns against it, the proof requires several intermediate results and it is  provided in Section \ref{two to ulbc} in the Appendix.
The proof consists of showing that such a strategy satisfies the trifecta condition and, hence, it has a uniformly large basin of attraction by Theorem \ref{ULBA for games}.

\section{Strategies with uniformly large basins of attraction}
\label{W banca tr subs}

In this section we study the existence of strategies with a uniformly large basins of attraction. We present a large class of strategies that satisfy the conditions introduced in Section \ref{large basin} and, hence, have a uniformly large basin of attraction. We then show that some commonly described strategies belong to this class and have uniformly large basins of attractions.

\subsection{Star-type strategies}
\label{type w}

The main property that star-type strategies satisfy is based on the frequency that someone following the strategy spends in each of the outcomes of the game when playing against another strategy. Remember that we denote by $h^{\tau}_{s,s'/h_t}$ for $\tau>t$ the 0-tremble actions at time $\tau$ if players follow strategies $s$ and $s'$ and history $h_t$ has been reached.

\begin{defi}\label{frequencies}
Given two strategies $s,s'$ and a finite history $h_t$ we define
 $$b_R=\frac{1-p^2\de}{p^2}\sum_{\tau: u(h{^\tau}_{s',s/h_{t}})=R}\, p^{2(\tau-t)+2}\de^{\tau-t},\,\,b_S=\frac{1-p^2\de}{p^2}\sum_{\tau: u(h{^\tau}_{s',s/h_{t}})=S}\, p^{2(\tau-t)+2}\de^{\tau-t},$$
$$b_T=\frac{1-p^2\de}{p^2}\sum_{\tau: u(h{^\tau}_{s',s/h_{t}})=T}\, p^{2(\tau-t)+2}\de^{\tau-t},\,\, b_P=\frac{1-p^2\de}{p^2}\sum_{\tau: u(h{^\tau}_{s',s/h_{t}})=P}\,p^{2(\tau-t)+2} \de^{\tau-t}.$$

Observe that the quantities $b_R, b_S, b_T,$ and $b_P$ depend on $s, s', \de, p$ and $h_t.$ 
\end{defi}

In short, the quantities defined above are the discounted frequencies of earning R, S, T, and P along the $0-$tremble history of $(s',s)$ starting at $h_t.$
\begin{defi}\label{type w defi} We say that  $s$ is a star-type strategy if
\begin{enumerate}
\item $(s,s)$ is a uniformly efficient strict perfect public equilibrium;
\item there exist $\de_0, p_0, C_5$ such that for any strategy $s'$, any $\de> \de_0, p>p_0$ and any finite history  $h_t$  such that $s(h_t)\neq s(h_t)$ it follows that if $b_T>0$ then
\begin{eqnarray}\label{eq on b}
 b_T< \ga_2 b_S +\ga_4 b_P+C_5(1-p^2\de),
\end{eqnarray}
\end{enumerate}
 where $\ga_2=\frac{R-S}{T-R}$, $\ga_4=\frac{R-P}{T-R}$ and $b_S, b_T, b_P$ are the quantities given by Definition \ref{frequencies}. We call inequality (\ref{eq on b}), the sufficiently responsive condition.
\end{defi}

The sufficiently responsive condition is related to  the frequency of playing cooperation or defection  towards $s'$ along the $0-$tremble history starting at any finite history. In fact, if at some time  $s'$ defects while $s$ cooperates (in  other words $s'$ scores $T$), then $b_T$ is positive and so the sufficient responsive condition implies that either $b_S$ or $b_P$ are also positive, that is, $s$ has to defect. 
In short, we can say that $s$ retaliates against defections by $s'$.
  
The next theorem shows that star-type strategies have a large basin of attraction.
\begin{teo} \label{* type ULBA} Any star-type strategy has a uniformly large basin of attraction.
\end{teo}

The proof is provided in Section \ref{ap for sect6} of the Appendix. The proof consists, mainly, on showing that a star-type strategy satisfies the cross ratio condition. The intuition is that an efficient strategy that satisfies the sufficiently responsive condition (see condition \ref{eq on b}) does not let other strategies take advantage of it, and as such, it will do well against them relative to how any other strategy performs against it (satisfying the cross ratio condition).

\subsection{Existence}
\label{w subsection}

In this section we provide examples of strategies with uniformly large basin of attraction.

We start by showing that, under certain condition on the payoff matrix, the strategy win-stay-lose-shift (WSLS or $w$) has a uniformly large basin of attraction.\footnote{The strategy win-stay-lose-shift (WSLS or $w$) is also known as Perfect TFT and was introduced by Fudenberg and Tirole (1991).} We define this strategy next.

\begin{defi}{\bf win-stay-lose-shift (w)}
Cooperates in $t=0$, and in $t>0$ cooperates if it earned either $R$ or $P$ in $t-1$, and defects otherwise.
\end{defi}

This strategy is described as a two-state machine in Figure \ref{strategies fig}. The strategy starts in the cooperation state, and stays in that state if both players cooperate or defect: $CC$ and $DD$, where the first letter denotes the action of the player and the second letter the action of the other player. The strategy moves to the defection state if one of the two players defects: $DC$ or $CD$. If the strategy is in the defection state, it returns to the cooperation state if both players cooperate or defect, $CC$ or $DD$, and remains in that state if only one of them defects: $CD$ and $DC$. In particular, observe that $w$ punishes any defection (which we will show that it implies that $w$ satisfies the sufficiently responsive condition) but it does not necessary return to cooperation, in fact it keeps defecting if it reached the defecting state and the other strategy cooperates (it could be say that $w$ may take advantage of the other player). Note also that $w$ punishes deviations with only one period of defection. Hence, $(w,w)$ can only be a subgame perfect equilibrium for sufficiently large $\delta$ if $2R>T+P$.

\begin{figure}[!htbpp]
\begin{center}
\includegraphics[width=.7\textwidth]{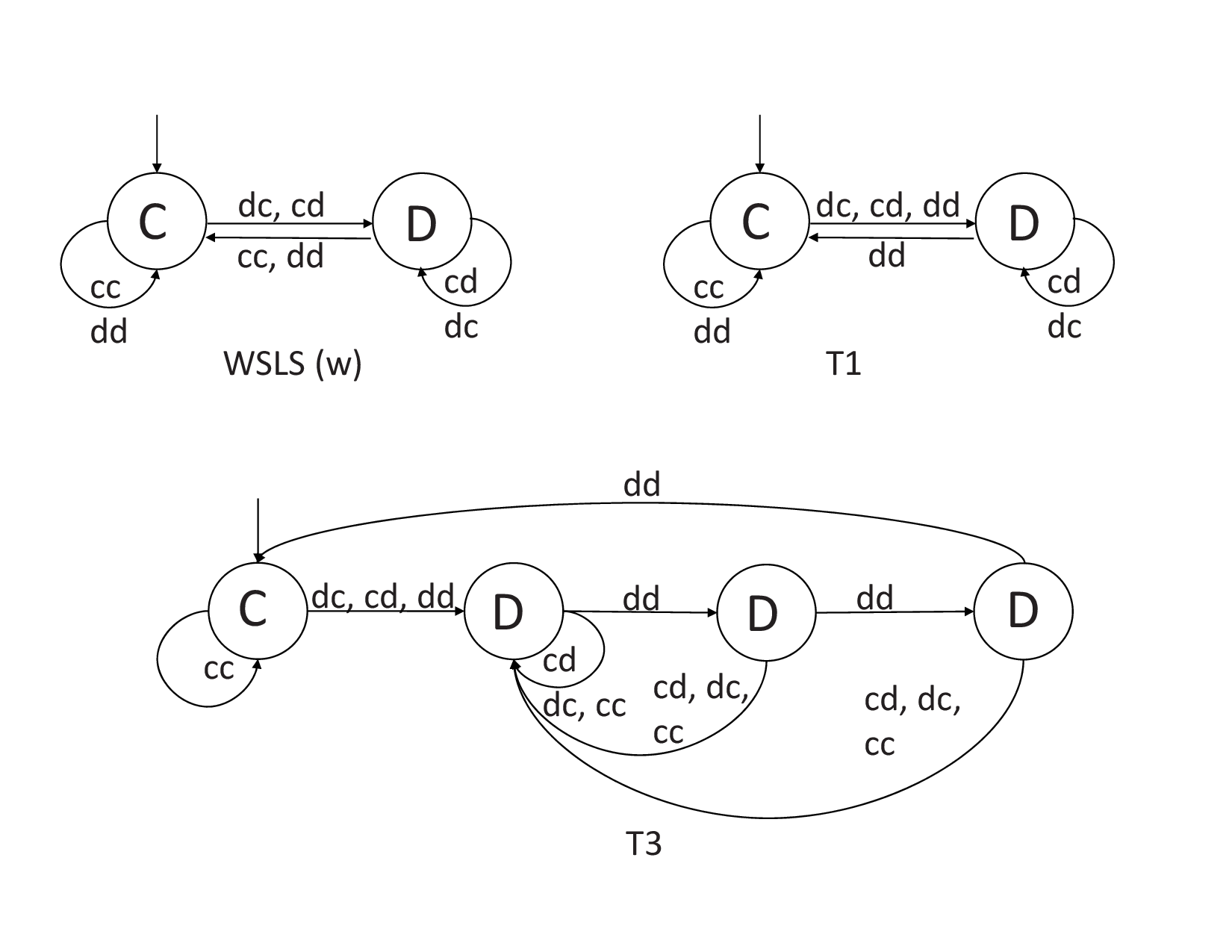}
 \end{center}
\caption{Examples of star-type strategies.}
\label{strategies fig}
\end{figure}

\begin{teo}\label{w-teo} Win-stay-lose-shift ($w$) has a uniformly large basin of attraction if $2R>T+P$.
\end{teo}

The proof of this theorem consists of showing that $w$ is a star-type strategy under that payoff restriction. The proof is provided in Section \ref{w teo proof} of the Appendix.

We show next, that even if this condition on payoffs does not hold, there are other strategies with a uniformly large basin of attraction. For that, we define next the family of $n$ trigger strategies ($Tn$), which react to a deviation from the prescribed behavior of the strategy by having $n$ periods of defection.

\begin{defi}{\bf \emph{n} Trigger ($Tn$)}
At $t=0$, it cooperates. At $t>0$, it cooperates if there was no deviation from the action of the strategy by any of the two players in the last \emph{n} periods, otherwise it defects. 
\end{defi}

Figure \ref{strategies fig} shows $T1$ and $T3$ as 2-state and 4-state machines. $T1$ starts in the cooperation state and stays there if both players cooperate $(CC)$, it moves to defection if there is a defection ($DC$, $CD$ or $DD$). Once in the defection state, the strategy moves to the cooperation state if both players defect ($DD$) and stays in the defection state otherwise ($CD$, $DC$, $CC$). $T3$ is similar except that there are more periods of punishment after a deviation. 

Note that there is a very small difference between $w$ and $T1$: the latter stays in the punishment stage if both players cooperate when they should be defecting. However, for $n>1$, $Tn$ allows for stronger punishments than $w$ (it punishes a deviation with more periods of defection). This stronger punishment allows $(Tn,Tn)$ to be a sub-game perfect equilibrium for a greater set of payoff paremeters than $(w,w)$. This also leads to the existence of strategies with uniformly large basin of attraction without constraints in the payoff parameters.

\begin{teo}\label{Tn-teo} For any Prisoner's Dilemma game, there exists a large enough $n$ such that $Tn$ has a uniformly large basin of attraction.
\end{teo}

The proof of this theorem is provided in Section \ref{Tn is sgp} in the Appendix and consists of showing that $Tn$ is a star-type strategy under that payoff restriction.

\section{Conclusions}

There is an extensive literature on the evolutionary determinants of cooperation in repeated games starting with the work by Axelrod and Hamilton (1981) and Axelrod (1984). We contribute to this literature by studying the evolutionary robustness of strategies in infinitely repeated prisoners' dilemma games with arbitrarily patient players and small probability of mistakes. We show that there are strategies which can repel invasion of up to a uniform size by any finite combination of invading strategies. These strategies cannot be unforgiving and they must cooperate if they are symmetric. We show that there is a large class of strategies that have a uniformly large basing of attraction. Examples of such a strategies are win-stay-lose-shift and trigger strategies.

A previous theoretical literature provides evolutionary support for the strategy win-stay-lose-shift (see Nowak and Sigmund 1992, and Imhof, Fudenberg and Nowak 2007). This strategy has received little support from experiments on infinitely repeated games (see Dal B\'o and Fr\'echette 2011, Fudenberg, Rand and Dreber 2012, and Dal B\'o and Fr\'echette 2019). We hope that new experiments can be designed to test whether this strategy, and other strategies with a uniformly large basin of attraction, are robust to invasions when they are already highly prevalent.

\section{Appendix A. Proofs}\label{proofs}

\subsection{Proofs for Section \ref{wef-sec}: Non-symmetric strategies and efficiency}\label{proofs:non-symmetric}
Before proving Theorem \ref{teowef-1} we provide two lemmas relating payoffs at $h_t$ and $\hat h_t.$

\begin{lem}\label{wef-3} For any profile of strategies $(s,s')$ and a history $h_t,$ $U_{\de,p=1}(s',s/\hat h_t)=U_{\de,p=1}(s,s'/ h_t)+(1-\delta)(a_S-a_T)(T-S),$ where
 $a_S=\sum_{\tau:u^{\tau}(s,s'/h_t)=S}\de^{\tau-t}$ and $a_T= \sum_{\tau:u^{\tau}(s,s'/h_t)=T}\de^{\tau-t}.$
\end{lem}

\begin{proof}
Given that $U_{\de,p=1}(s,s'/ h_t)=(1-\delta)(a_R R+a_S S+a_T T+a_P P)$ where the constants $a_R$ and $a_P$ are defined as $a_R=\sum_{\tau:u^\tau(s,s'/h_t)=R}\de^{\tau-t}$ and $a_P= \sum_{\tau:u^{\tau}(s,s'/h_t)=P}\de^{\tau-t}$, then $U_{\de,p=1}(s',s/\hat h_t)=(1-\delta)(a_R R+a_S T+a_T S+a_P P)=U_{\de,p=1}(s,s'/h_t)+(1-\delta)(a_S T+a_T S-a_S S-a_T T)=U_{\de,p=1}(s,s'/h_t)+(1-\delta)(a_S-a_T)(T-S).$
\end{proof}

\begin{lem}\label{wef-4} Given a strategy profile $(s,s')$ and a path $h_t$ it follows that $U_{\de,p=1}(s,s'/\hat h_t)\leq U_{\de,p=1}(s',s/h_t)+(1-\delta)(a_S+a_T)(T-S)$.
\end{lem}

\begin{proof}
From Lemma \ref{wef-3} and the fact that $a_S,a_T\geq0$ and $T-S>0$, $U_{\de,p=1}(s,s'/\hat h_t)=U_{\de,p=1}(s',s/h_t)+(1-\delta)(a_S-a_T)(T-S)\leq U_{\de,p=1}(s',s/h_t)+(1-\delta)(a_S+a_T)(T-S)$.
\end{proof}

\noi{\it Proof of Theorem \ref{teowef-1}.} Assume, by contradiction, that $U(s,s/h_t)+U(s,s/\hat h_t)< 2R-{2}c(T-S)$ for some $h_t$. Consider a particular history $h_t$ such that for some $R_1<R$ it holds  $U(s,s/h_t)+U(s,s/\hat h_t)= 2R_1-{2}c(T-S)$ and this value of payoffs is arbitrarily close to the infimum of all possible values. Define an alternative strategy $s'$ such that: first, $s'$ differs from $s$ in histories $h_t$ and $\hat h_t$: $s'(h_t)\neq s(h_t)$ and $s'(\hat h_t)\neq s(\hat h_t)$; second, $s'$ cooperates with itself after $h_t$ and $\hat h_t$: $s'(h_ts'(h_t)s'(\hat h_t)h_{\tau})=C$ and $s'(\hat h_ts'(\hat h_t)s'(h_t)\hat h_{\tau})=C$ for all $h_{\tau}$; and, third, $s'$ imitates $s$ in all other histories.

We will show that the ratio of miscoordination costs $$\frac{U_{\delta p}(s',s'|h_{t}) +U_{\delta p}(s',s'|\hat{h}_{t}) -U_{\delta p}(s,s'|h_{t}) -U_{\delta p}(s,s'|\hat{h}_{t})}{U_{\delta p}(s,s|h_{t}) +U_{\delta p}(s,s|\hat{h}_{t}) -U_{\delta p}(s',s|h_{t}) -U_{\delta p}(s',s|\hat{h}_{t})}$$ can be made arbitrarily large and, hence, $s$ does not not have a uniformly large basin of attraction.

We focus first on the numerator of the ratio of miscoordination costs. Note that by Lemma \ref{wef-4} we have that 
\begin{eqnarray*}
U_{\de,p=1}(s,s'/h_t)+U_{\de,p=1}(s,s'/\hat h_t) &\leq & U_{\de,p=1}(s',s/h_t)+U_{\de,p=1}(s',s/\hat h_t)\\
&+&(1-\delta)(a_S+a_T+\hat a_S+\hat a_T)(T-S), 
\end{eqnarray*}
 where the $a_S$ and $a_T$ are defined as in Lemma \ref{wef-3} and $\hat a_S$ and $\hat a_T$ are similarly defined but starting from history $\hat h_t$. Given the construction of $s'$ and the definition of $c$ we have that $(1-\delta)(a_S+a_T+\hat a_S + \hat a_T) \leq 2c+2(1-\delta).$ Then $U_{\de,p=1}(s,s'/h_t)+U_{\de,p=1}(s,s'/\hat h_t)\leq U_{\de,p=1}(s',s/h_t)+U_{\de,p=1}(s',s/\hat h_t))+2(c+1-\delta)(T-S)$ which in turn is smaller than $U_{\de,p=1}(s,s/h_t)+U_{\de,p=1}(s,s/\hat h_t)+2(c+1-\delta)(T-S)$ as $(s,s)$ is a strict perfect public equilibrium. Given that $\lim_{\de\to 1}U_{\de,p=1}(s,s/h_t)+U_{\de,p=1}(s,s/\hat h_t)=2R_1-2c(T-S),$ we have that $\lim_{\de\to 1}\left(U_{\de,p=1}(s,s'/h_t)+U_{\de,p=1}(s,s'/\hat h_t)\right)\leq 2R_1.$

Therefore, the limit of the numerator of the ratio of miscoordination costs is bounded away from zero:
$$\lim_{\de\to 1}[U_{\de,p=1}(s',s'/h_t)+U_{\de,p=1}(s',s'/\hat h_t)-U_{\de,p=1}(s,s'/h_t)-U_{\de,p=1}(s,s'/\hat h_t)]\geq  2(R-R_1).$$

We focus now on the denominator of the ratio of miscoordination costs. It holds that $U_{\de, p}(s,s/ h_t)+U_{\de, p}(s,s/\hat h_t)>U_{\de, p}( s',s/ h_t)+U_{\de, p}(s',s/\hat h_t)$ due to the fact that $s$ is a strict perfect public equilibrium (otherwise it would not have a uniformly large basin of attraction). Moreover, note that the difference is arbitrarily small for large $\delta$ as $U_{\de, p}(s,s/ h_t)+U_{\de, p}(s,s/\hat h_t)$ is, by choice of $h_t$, arbitrarily close to the infimum of the possible equilibrium values of this sum.

Since the denominator of the ratio of miscoordination costs is arbitrarily small but the numerator is bounded away from zero, the ratio of miscoordination costs can be made arbitrarily large and $s$ does not have a uniformly large basin of attraction.\qed

\subsection{Proofs for Section \ref{large basin}: Sufficient conditions to have a uniformly large basin}\label{proofs:sufficientconditions}

In the next subsection we provide a result on replicator dynamics which is the base of the proof of Theorem \ref{ULBA for games}, which we present in Section \ref{newconditions}. Subsection \ref{alcanza de a 2} provides some preliminary results regarding the cross ratio condition, and Section \ref{two to ulbc} provides the proof of Theorem \ref{2 implica ULBC}.

\subsubsection{Main theorem for replicator dynamics}
\label{ULBA for RD}

Consider the replicator dynamics as defined in Section \ref{sec: replicator}. Recall that we consider the replicator dynamics associated to the matrix $U$ on the $n$ dimensional simplex $\Delta=\{x=(x_1\dots x_n)\in \RR^n: x_1+\dots+x_n=1,\,\, x_i\geq 0,\forall i\}$ denoting all the possible distributions of the strategies in the population of agents. The replicator dynamics are given by $ \dot x_i=x_i[u_i(x)-\bar u(x)]$ where $u_i(x) = \sum_{j=1}^n x_ju_{ij}$ is the expected payoff of strategy $i$, and $\,\,\,\bar u(x)=\sum_{j=1}^n x_ju_j(x)$ is the average expected payoff.

We start by simplifying the replicator equations by writing them as a function of the vector $\bar x = (x_2,..,x_n)$. The reason we can do this is that, by definition, $x_1=1-\sum_{i\geq 2}x_i.$ That is, we consider an affine change of coordinates to define the dynamics in the set  $[0,1]^{n-1}$  instead of the simplex $\De$. Observe that in these coordinates the point $(0,.., 0)\in[0,1]^{n-1}$ corresponds to $e_1=(1,0,.., 0)\in[0,1]^{n}$, and $\{(x_2,.., x_n): x_i\geq 0, \sum_{i=2}^n x_i \leq 1\}$ corresponds to the simplex $\De$.

Given the payoff matrix $U$, we define a  matrix $M$ and the vector $N$ given by 
$$N_{i}=u_{11}-u_{i1},\,\,\, M_{ij}= u_{ij}-u_{1j}+u_{11}-u_{i1}.$$
Moreover, we assume that 
the vertex $\{e_2\dots e_n\}$ are ordered in such a way that 
$u_{11}-u_{i1}\geq u_{11}-u_{j1}$ for any $ 2\leq i<j.$

\begin{teo}\label{a1} Let $U\in \RR^{n\times n}$ such that $u_{j1}< u_{11}$ for any $2\leq j\leq n.$ 
\begin{eqnarray}\label{NM2}
  M_0&=&\max_{i,j\geq i}\{\frac{M_{ij}+M_{ji}}{N_i},\,\,\, 0 \}.
\end{eqnarray}
  Then,   $\De_{\frac{1}{M_0}}=\{x: \sum_{i\geq 2}\,  x_i \leq \frac{1}{M_0}\} \subset B_{loc}(e_1)$ (the local basin of attraction of $e_1$). 
\end{teo}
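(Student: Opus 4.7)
The natural approach is to use $V(\bar x):=\sum_{j\geq 2} x_j$, the $L^1$-distance from $e_1$ in affine coordinates, as a Lyapunov function and prove that it strictly decreases on $\De_{1/M_0}\setminus\{e_1\}$.

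First, I would compute $\dot V$ from the replicator equation in affine coordinates. Writing $F_j=(f_j-f_1)+R$ as in (\ref{eq F_j}) with $R$ as in (\ref{eq R}), the identity $\sum_{j\geq 2} x_j(f_j-f_1)=-R$ is immediate from the definition of $R$, hence
\[
\dot V \;=\; \sum_{j\geq 2} x_j F_j(\bar x) \;=\; -R(\bar x)+R(\bar x)V \;=\; R(\bar x)\,(V-1).
\]
Since $V\leq 1$ inside the simplex, $\dot V$ has the sign opposite to $R$, and the problem reduces to proving $R(\bar x)>0$ on $\{0<V\leq 1/M_0\}$.

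Second, I would lower bound $R$ using the $M_0$ hypothesis. Substituting $(f_l-f_1)(\bar x)=N_l+\sum_{k\geq 2}M_{lk}x_k$ gives the quadratic expression
\[
R(\bar x)\;=\;\sum_{l\geq 2}\alpha_l x_l\;-\;\tfrac12\sum_{l,k\geq 2}(M_{lk}+M_{kl})\,x_l x_k,\qquad \alpha_l:=-N_l>0.
\]
By definition of $M_0$, $M_{ij}+M_{ji}\leq M_0\,\alpha_i$ for every $i\leq j$ (the case $j=i$ reading $2M_{ii}\leq M_0\alpha_i$), and the assumed ordering $\alpha_i\geq\alpha_j$ for $i\leq j$ upgrades this to the symmetric estimate $M_{lk}+M_{kl}\leq M_0\max(\alpha_l,\alpha_k)$ for all $l,k\geq 2$. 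Bounding $\max(\alpha_l,\alpha_k)\leq\alpha_l+\alpha_k$ and using $\sum_{l,k}(\alpha_l+\alpha_k)x_l x_k=2V\sum_l\alpha_l x_l$ yields
\[
\tfrac12\sum_{l,k\geq 2}(M_{lk}+M_{kl})\,x_l x_k \;\leq\; M_0\,V\sum_{l\geq 2}\alpha_l x_l,
\]
and therefore
\[
R(\bar x)\;\geq\;(1-M_0V)\sum_{l\geq 2}\alpha_l x_l,
\]
which is strictly positive on $\{0<V<1/M_0\}$. Combining the two steps, $\dot V=R(V-1)\leq 0$ on $\De_{1/M_0}$ with strict inequality outside $e_1$, so $V$ is a strict Lyapunov function: $\De_{1/M_0}$ is forward invariant and every orbit inside it converges to $e_1$, giving $\De_{1/M_0}\subset B^s_{loc}(e_1)$.

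I expect the main obstacle to be precisely the passage from the $M_0$ hypothesis to the lower bound on $R$. What makes it delicate is that $M_0$ pairs each interaction $M_{ij}+M_{ji}$ with the asymmetric weight $-N_i$ of the smaller-indexed vertex, whereas the Lyapunov function $V$ is symmetric in all coordinates. The ordering assumption on the vertices is exactly what reconciles the two viewpoints via $\alpha_{\min(l,k)}=\max(\alpha_l,\alpha_k)$, and the apparently crude bound $\max\leq$ sum is what allows the quadratic form to collapse into $V\sum_l\alpha_l x_l$ uniformly in $n$. It is this uniformity that produces a basin radius $1/M_0$ independent of the dimension of the payoff matrix, which is the whole point of the theorem.
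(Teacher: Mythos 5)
Your proof is correct and follows essentially the same route as the paper: the same Lyapunov function $V(\bar x)=\sum_{j\geq 2}x_j$, the same identity $\dot V=(V-1)R(\bar x)$ obtained from $F_j=(f_j-f_1)+R$, and the same use of the ordering hypothesis through the inequality $M_{lk}+M_{kl}\leq M_0\,\alpha_{\min(l,k)}$. The only cosmetic difference is that you bound the quadratic form $N\bar x+\bar x^{t}M\bar x$ directly by symmetrizing the double sum, whereas the paper packages the identical estimate as lemma \ref{gralA}, restricting to rays $s\mapsto sv$ and comparing the roots of the resulting one-variable quadratic; both arguments yield $R(\bar x)\geq (1-M_0V)\sum_{l\geq 2}\alpha_l x_l$ and hence the same dimension-independent basin radius $\frac{1}{M_0}$.
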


 The intuition behind this result is that the replicator dynamic is a quadratic equation and therefore only pairs of alternative strategies matter in calculating the differences in payoffs. In the appendix we show that this result also holds for more general evolutionary dynamics. %We do not have that in the appendix now

The proof of Theorem \ref{a1} is based on the next lemma about quadratic polynomials.

\begin{lem}\label{gralA}
 Let $Q:\RR^n\to \RR$ given by 
$Q(x)=-Nx+x^tMx$ with $x\in \RR^n$, $N\in \RR^n$ and $ M\in \RR^{n\times n}$. Let us assume that $N_i>0$, and $N_i\geq N_j$ for any $j>i$. 
Let $M_0=\max_{i,\,j> i}\{\frac{M_{ij}+M_{ji}}{N_i},\,\,\,0\}.$
Then,  the set $\De_{\frac{1}{M_0}}=\{x\in \RR^n: x_i\geq 0, \sum_{i=1}^n x_i <\frac{1}{M_0}\} $ is contained in $ \{x: Q(x)< 0\}.$ In particular, if $M_0=0$ then  $\frac{1}{M_0}$ is treated as $\infty$ and this means that  $\{x\in \RR^n: x_i\geq 0\}\subset \{x: Q(x)\leq 0\}.$
\end{lem}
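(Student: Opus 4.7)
The strategy is to prove directly the pointwise inequality
$$Q(x) \;\leq\; (Nx)\bigl(1 - M_0\, T\bigr), \qquad T := \sum_i x_i,$$
writing $n_i := -N_i > 0$. Since $Nx = -\sum_i n_i x_i$ is strictly negative on the nonzero positive quadrant, the right-hand side is itself strictly negative as soon as $T < 1/M_0$, which is exactly the conclusion of the lemma.

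The whole task therefore reduces to the estimate $x^t M x \leq M_0\, T(-Nx)$. To obtain it, I would first symmetrize
$$x^t M x \;=\; \tfrac{1}{2}\sum_{i,j}(M_{ij}+M_{ji})\,x_i x_j,$$
and then, for every pair of indices, combine the definition of $M_0$ with the ordering hypothesis $n_1 \geq n_2 \geq \cdots \geq n_n$. If $i \leq j$ then $n_i = \max(n_i,n_j)$, so the defining inequality $(M_{ij}+M_{ji})/(-N_i) \leq M_0$ reads
$$M_{ij}+M_{ji} \;\leq\; M_0\,\max(n_i,n_j)\qquad \text{for every pair }(i,j),$$
where the ``$\max$ with $0$'' clause in the definition of $M_0$ absorbs pairs with negative numerator. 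Plugging in the crude bound $\max(n_i,n_j) \leq n_i + n_j$ then collapses the double sum:
$$\sum_{i,j}(M_{ij}+M_{ji})\,x_i x_j \;\leq\; M_0 \sum_{i,j}(n_i+n_j)\,x_i x_j \;=\; 2\,M_0\,T\,\sum_i n_i x_i \;=\; 2\,M_0\,T(-Nx).$$
Dividing by $2$ gives the required inequality and hence $Q(x) \leq Nx(1-M_0 T) < 0$ on $\De_{1/M_0}\setminus\{0\}$.

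The thing I would expect to watch most carefully is the bookkeeping of the two factors of $2$: the symmetrization doubles the sum (extending from $i \leq j$ to all ordered pairs), while $\max(n_i,n_j)\leq n_i+n_j$ loses a factor $2$, and the two exactly cancel so as to produce the advertised threshold $1/M_0$. A tempting but wasteful shortcut would be to bound $M_{ij}+M_{ji} \leq M_0\,n_1$ uniformly, which would yield a threshold scaling like $(n_n/n_1)(1/M_0)$ and lose the dimension-independence the lemma is designed to provide. The one minor subtlety is that the diagonal contribution $M_{ii}x_i^2$ is only covered if one allows $j=i$ among the candidates in the definition of $M_0$ (i.e. reads the index range as $j\geq i$, consistently with how $M_0$ appears in the statement of Theorem \ref{a1}); with that understanding the same bound $2 M_{ii}/n_i \leq M_0$ plugs into the symmetrized sum and the argument proceeds uniformly over all pairs $(i,j)$.
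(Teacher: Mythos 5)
Your proof is correct, and it reaches the paper's conclusion by a slightly different route. The heart of both arguments is the same estimate on the quadratic form: the paper proves $v^tMv\le M_0(-Nv)$ for $L^1$-unit positive vectors $v$ (grouping each pair by its smaller index and using $\sum_{j\ge i}v_j\le 1$), which is, by homogeneity, exactly your inequality $x^tMx\le M_0\,T\,(-Nx)$ with $T=\sum_i x_i$. Where you differ is in the packaging: the paper restricts $Q$ to rays $s\mapsto Q(sv)$, computes the two roots of the resulting one-variable quadratic, and splits into cases according to the sign of $v^tMv$; you instead derive the factorized pointwise bound $Q(x)\le (Nx)(1-M_0T)$ directly via symmetrization and the relaxation $\max(n_i,n_j)\le n_i+n_j$, which handles both signs of the quadratic term uniformly and dispenses with the root analysis. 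What the paper's ray formulation buys is the by-product recorded in remark \ref{a1-obs2} (the explicit boundary point $p_{1j}$ on each edge); what your version buys is a shorter, case-free argument. Your remark about the diagonal is also well taken: as literally stated, lemma \ref{gralA} takes the maximum over $j>i$ only, yet the paper's own proof bounds $v_i^2M_{ii}$ by $v_i^2(-N_i)M_0$, so the diagonal must be included, i.e.\ the definition should be read with $j\ge i$ as in theorem \ref{a1} (otherwise already $n=1$ with $M_{11}>0$ gives a counterexample); with that reading your bound $2M_{ii}\le M_0 n_i\le M_0(n_i+n_i)$ covers the diagonal terms and the factor-of-two bookkeeping closes exactly as you describe.
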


\begin{proof} For any $v\in \RR^n$ such that $v_i\geq 0$ and $\sum_i\, v_i=1$, we consider the following one dimensional quadratic polynomial, $Q^v:\RR\to \RR$ given by 
$ Q^v(s):= Q(sv)= -s Nv+ s^2 v^tMv.$
To prove the thesis of the lemma it is enough to show the following claim:  for any positive vector $v$  with norm  equal to $ 1$, if $ 0<s<\frac{1}{M_0}$ then $Q^v(s)<0$; in fact, the claim implies the lemma, otherwise, arguing by contradiction,  if there is a point $x_0\in \De_\frac{1}{M_0}$ different than zero (i.e.: $0<|x_0|<\frac{1}{M_0}$) such that $Q(x_0)=0$, then taking $v=\frac{x_0}{|x_0|}$ and $s=|x_0|$ it follows that $Q^v(s)=-Nx_0+x_0^tMx_0=0$, but $|v|=1, s< \frac{1}{M_0}$, a contradiction.

Now we proceed to show the above claim. Given that $Nv= \sum N_iv_i >0,$ if $v^tMv=0,$ then $Q^v(s)<0$. If if $v^tMv\neq0,$ the roots of $Q^v(s)$ are given by $s=0$ and $s=\frac{Nv}{v^tMv}.$
 If $v^tMv<0$ then it follows that $Q^v$ is a one dimensional quadratic polynomial with negative quadratic term and two non-positive roots, so  for any $s>0$ it holds that $Q^v(s)<0$ and therefore proving the claim in this case. So, it remains to consider the case in which $v^tMv>0$. In this case, $Q^v$ is a one dimensional quadratic polynomial with positive quadratic term ($v^tMv$), therefore for any $s$ between both roots ($0, \frac{Nv}{v^tMv}$) it follows that $Q<0$. To finish we have to prove that $\frac{Nv}{v^tMv}\geq\frac{1}{M_0}$ which follows from the next inequalities:
\begin{eqnarray*}
v^tMv&=& \sum_{ij} v_iv_jM_{ij}= \sum_i [v_i^2 M_{ii}+ \sum_{j>i} v_iv_j (M_{ij}+M_{ji})]\\
&\leq& \sum_i [v_i^2 N_iM_0 + \sum_{j>i} v_iv_j N_iM_0]= M_0 \sum_i N_i v_i [\sum_{j\geq i} v_j]\\
&\leq&  M_0 \sum_i N_iv_i= M_0 Nv.
 \end{eqnarray*}
 \end{proof}

\noindent{\em Proof of Theorem \ref{a1}:} Remember that $\dot x_i=x_i[u_i(x)-\bar u(x)]$ by the definition of replicator dynamics. Note that $u_i(x)-\bar u(x)=[u_i(x)-u_1(x)]+[u_1(x)-\bar u(x)]$. The second term in brackets is $u_1(x)-\bar u(x)=\sum_{j\geq 1}\,  x_j[ u_1- u_j]=\sum_{j\geq 2}\,  x_j[ u_1- u_j].$ Therefore, the difference in payoffs determining the growth of strategy $i$ is: $u_i(x)-\bar u(x)=(u_i-u_1)(x)+R(x)$ where $R(x)= \sum_{j\geq 2} \,(u_1-u_j)(x) x_j$.

We consider the change of coordinates $\bar x = (x_2,..,x_n)$ as introduced before.  For any $k< 1$ we denote  $\De_k:= \{\bar x: \sum_{i\geq 2}\,  x_i \leq k\}$ and $\partial \De_k= \{\bar x: \sum_{i\geq 2}\,  x_i = k\}.$ To conclude that for any initial condition in $\De_{\frac{1}{M_0}}$ follows that its forward trajectory converges to zero, it is enough to show that 
the function $\bar X(t):=\sum_{i\geq 2} \, x_i(t)$ is a strictly decreasing function of $t$. In fact, if this holds, given that the trajectory is always in $\De$, then $\bar X(t)\to 0$ and therefore $x_i(t)\to 0$ for any $i\geq 2$. To do that, we prove $\dot{\bar X}=\frac{\partial\bar X}{\partial t}<0$ if $\bar x(t)=(x_2(t),\dots, x_n(t))\in \De_{\frac{1}{M_0}}.$ Therefore, we have to show that
\begin{equation}\label{cond-dec}
Q(\bar x):= \sum_{i\geq 2}\,  \dot x_i < 0.
\end{equation}
Remember that $\dot x_i=x_i[u_i(\bar x)-\bar u(\bar x)]$ by the definition of replicator dynamics, and that $u_i(\bar x)-\bar u(\bar x)=(u_i-u_1)(\bar x)+R(\bar x)$ where $R(\bar x)= \sum_{j\geq 2} \,(u_1-u_j)(\bar x) x_j$.
Therefore, 
\begin{eqnarray*}
 Q(\bar x)&=& \sum_{i\geq 2}\, (u_i-u_1)(\bar x) x_i + \sum_{i\geq 2}\, R(\bar x) x_i=\sum_{i\geq 2}\, (u_i-u_1)(\bar x) x_i +  R(\bar x)\sum_{i\geq 2}\, x_i.
\end{eqnarray*}
 Since $\sum_{i\geq 2}\, x_i=k$ (with $k<1$), it follows that 
$Q(\bar x)= \sum_{i\geq 2}\, (u_i-u_1)(\bar x) x_i +  R(\bar x) k.$ Recalling the definition of $R$, we get that 
$ Q(\bar x)= (1-k)\sum_{i\geq 2}\, (u_i-u_1)(\bar x) x_i.$
So, to prove inequality (\ref{cond-dec}) is enough to show that  
\begin{equation}\label{cond-dec-2}
 Q(\bar x)=  (1-k)\sum_{i\geq 2} \,x_i (u_i-u_1)(\bar x)< 0 
\end{equation}
for any $\bar x\in \Delta_k$ and $ k< \frac{1}{M_0}.$
First we rewrite $Q$. Observe that 
\begin{eqnarray*}
 (u_i-u_1)(\bar x)&=&\sum_{j\geq 1} \, (u_{ij}-u_{1j})x_j=u_{i1}-u_{11} + \sum_{j\geq 2} (u_{ij}-u_{i1}+u_{11}-u_{1j})\, x_j.
\end{eqnarray*}
If we note the vector $N:=(u_{11}-u_{j1})_j$ and the matrix
$M:=(M_{ij})=u_{ij}-u_{i1}+u_{11}-u_{1j}.$
Therefore, 
\begin{equation}\label{cond-dec-3}
sign( Q(\bar x))= sign(-N\bar x+\bar{x}^tM \bar x).           
\end{equation}
So we have to find the region given by $\{\bar x: -N\bar x+\bar{x}^tM \bar x<0\}$. To deal with it, we apply Lemma \ref{gralA}
and we use equation (\ref{NM2}) and the theorem is concluded. \qed

\begin{obs}\label{a1-obs2} If we apply the proof of Lemma \ref{gralA} to the particular case in which $s$ is the only invader, we have that $sign(Q(x_2))= sign(x_2[u_{21}-u_{11}+  (u_{22}-u_{12}+u_{11}-u_{21}) \, x_2])$. Hence, $Q(s)=0$ if and only if $x_2=0$ or $ x_2 =\frac{u_{11}-u_{21}}{u_{11}-u_{21}+u_{22}-u_{12}}=\frac{1}{1+\frac{u_{22}-u_{12}}{u_{11}-u_{21}}}=p_{12}$ and so $Q(s)<0$, for any $0<s<p_{12}.$
In particular, if we apply this to Theorem \ref{gralA}, it follows that the whole segment $[0,p_{12})$ is in the basin of attraction of $e_1.$
\end{obs}

\subsubsection{Proof of Theorem \ref{ULBA for games}: applying Theorem \ref{a1}}
\label{newconditions}

\noi{\it Proof of theorem \ref{ULBA for games}.} The proof follows immediately from Theorem \ref{a1} and the definition of $M(s)$.
In fact, ordering  the strategies in such a way that $s$ corresponds to the first one and  $N(s, s_i)\geq N(s, s_j)$ if $j>i$ and to be coherent with notation, then 
\begin{eqnarray*}
 M_{ij}+M_{ji}&= &U_{\de,p}(s,s)-U_{\de,p}(s_i,s)+U_{\de,p}(s,s)-U_{\de,p}(s_j,s)\\
& +& U_{\de,p}(s_j,s_i)-U_{\de,p}(s,s_i)+U_{\de,p}(s_i,s_j)-U_{\de,p}(s,s_j).
\end{eqnarray*}
Given that $N(s, s_i)\geq N(s, s_j)$ for $j>i$,it follows that 
$$\frac{M_{ij}+M_{ji}}{N_{i}}\leq 2+\frac{ U_{\de,p}(s_j,s_i)-U_{\de,p}(s,s_i)+U_{\de,p}(s_i,s_j)-U_{\de,p}(s,s_j)}{U_{\de,p}(s,s)-U_{\de,p}(s_i,s)}$$
and so the constant $M_0=\sup\{\frac{M_{ij}+M_{ji}}{-N_{i}},\,  0\} \leq M(s)+2$. By Theorem \ref{a1}, $\De_{\frac{1}{M(s)+2}}=\{x: \sum_{i\geq 2}\,  x_i \leq \frac{1}{M(s)+2}\}\subset B_{loc}(e_1)$ and $s$ has a uniformly large basin of attraction given that $s$ satisfies the trifecta condition ($M(s)$ is bounded).  
\qed

\subsubsection{Preliminary results regarding the cross ratio condition}
\label{alcanza de a 2}
In this section we provide some preliminary results which will be used in some calculations in the next section and in Section \ref{ap for sect6}.

This section deals with the calculation of the cross ratio condition. This conditions requires bounding the following ratio: $\frac{U_{\de,p}(s,s)-U_{\de,p}(s,s')}{U_{\de,p}(s,s)-U_{\de,p}(s',s)}$. The main idea is that to calculate both the numerator and denominator, we can focus on the histories of first divergence between the two strategies. That is, when calculating the denominator we can focus on the finite histories such that the behavior of the pair of strategies $(s,s)$ without trembles deviates for the first time from that of $(s',s)$; and when calculating the numerator we can focus on the finite histories such that the behavior of the pair of strategies $(s,s)$ without trembles deviates for the first time from that of $(s,s')$. We also show that the histories of first deviation in both cases are related: the behavior of $(s,s)$ differs from the behavior of $(s',s)$ at $h_t$, if and only if $(s,s)$ differs from the behavior of $(s,s')$ at $\hat h_t$.

Let us consider two strategies $s$ and $s'$ and let 
$$\cR_{s,s'}:= \{h_t\in H: s(h_t)\neq s'(h_t) \,\mbox{and} \,s(h_\tau)=s'(h_\tau) \forall h_\tau \,\mbox{such that}\, h_t = h_\tau h_{t-\tau} \},$$
$$\cE_{s,s'}:= \{h_t\in H: s(h_t)=s'(h_t) \,\mbox{and} \,s(h_\tau)=s'(h_\tau) \forall h_\tau \,\mbox{such that}\, h_t = h_\tau h_{t-\tau} \}.$$

The set $\cR_{s,s'}$ includes all the finite histories for which the behavior of $s$ differs from the behavior of $s'$ for the first time. Observe that if $s\neq s'$ then $\cR_{s, s'}\neq \emptyset.$ The set $\cE_{s,s'}$ includes all the finite histories for which the behavior of $s$ does not differ from the behavior of $s'$ and has not differed in the past. If $s(h_0)\neq s'(h_0)$, then $\cE_{s,s'}=\emptyset.$

Recall that with $U_{\de, p}(s, s'/h_t)$ we denote the utility with seed $h_t$.

\begin{lem}\label{calc eq paths}
For any two strategies $s$ and $s'$, it follows that: 
\begin{eqnarray}\label{differences}
U_{\de,p}(s,s)-U_{\de,p}(s',s)= \sum_{ h_t\in \cR_{s,s'}}\,\de^{t} p_{s,s}(h_t)(\,U_{\de,p}(s,s/h_t)-U_{\de,p}(s',s/h_t)).
\end{eqnarray}
\end{lem}

\begin{proof}
The lemma follows from the fact that $p_{s,s}(h_t)=p_{s',s}(h_t)$ for any $h_t\in\cE_{s,s'}\cup\cR_{s,s'}$ and the definition of $U_{\de,p}(s,s/h_t).$
\end{proof}

Now we consider the set 
$$\widehat\cR_{s,s'}:= \{h_t\in H: s(\hat h_t)\neq s'(\hat h_t) \,\mbox{and} \,s(\hat h_\tau)=s'(\hat h_\tau) \forall h_\tau \,\mbox{such that}\, h_t = h_\tau h_{t-\tau} \}.$$

\begin{lem}\label{calc eq paths bis} Given two strategies $s$ and $s'$, it follows that: 
\begin{eqnarray*}\label{differences2}
U_{\de,p}(s,s)-U_{\de,p}(s,s')&=  & \sum_{ h_t\in \widehat\cR_{s,s'}}\,\de^{t} p_{s,s}( h_t)(\,U_{\de,p}(s,s/h_t)-U_{\de,p}(s,s'/ h_t)).
\end{eqnarray*}
\end{lem}

We omit the proof of Lemma \ref{calc eq paths bis} as it is similar to that of Lemma \ref{calc eq paths}.
 
\begin{lem}\label{biyeccion}Given two strategies $s$ and $s',$ the map $h_t\to \hat h_t$ is a bijection between $\cR_{s, s'}$ and $\hat\cR_{s, s'}$, such that, $h_t\in \cR_{s,s'}$ if and only if $\hat h_t\in \widehat\cR_{s,s'}.$ 

\end{lem}
\begin{proof} The only if direction of the lemma follows from the definition of $\cR_{s,s'}$ and $\widehat\cR_{s,s'}.$ The if direction then follows immediately from the fact that $\hat{\hat h}= h.$
\end{proof}

Lemma \ref{biyeccion} the histories of first deviation in both cases are related: if the behavior of $(s,s)$ differs from the behavior of $(s',s)$ at $h_t$, if and only if $(s,s)$ differs from the behavior of $(s,s')$ at $\hat h_t$. This allows us to calculate $U_{\de,p}(s,s)-U_{\de,p}(s,s')$ focusing on histories in $\cR_{s,s'}$, which will allow for simple calculations in some proofs as we would use the same set of histories in both the numerator and denominator of the cross ratio condition.

\begin{lem}\label{calc eq paths hat} Given two strategies $s$ and $s'$ it follows that 
 $$U_{\de,p}(s,s)-U_{\de,p}(s,s')=   \sum_{ h\in \cR_{s,s'}}\,\de^{t} p_{s,s}( h_t)(\,U_{\de,p}(s,s/\hat h_t)-U_{\de,p}(s,s'/\hat h_t)).$$
\end{lem}

\begin{proof}
The lemma follows from Lemmas \ref{calc eq paths bis} and \ref{biyeccion}.
\end{proof}

\subsubsection{Proof of Theorem \ref{2 implica ULBC}: the cross ratio condition implies the trifecta condition}
\label{two to ulbc}

We have to show that $s$ satisfies the trifecta condition to prove that $s$ has a uniformly large basin of attraction; in other words, we need to show that there exists $L'$ such that for any $s^*, s'$ such that $U_{\de,p}(s, s)-U_{\de,p}(s^*, s)\geq U_{\de,p}(s, s)-U_{\de,p}(s', s)$ it follows that $M_{\de,p}(s,s^*,s')< L'.$

Observe that 
\begin{eqnarray*}
& & M_{\de,p}(s,s^*,s')=\frac{U_{\de,p}(s',s^*)+U_{\de,p}(s^*,s')-U_{\de,p}(s, s)-U_{\de,p}(s, s),}{U_{\de,p}(s, s)-U_{\de,p}(s^*, s)}=\\
& & \frac{U_{\de,p}(s',s^*)+U_{\de,p}(s^*,s')-2U_{\de,p}(s,s)}{U_{\de,p}(s, s)-U_{\de,p}(s^*, s)}+ \frac{U_{\de,p}(s,s)-U_{\de,p}(s, s^*)}{U_{\de,p}(s, s)-U_{\de,p}(s^*, s)}\\
& & +\frac{U_{\de,p}(s,s)-U_{\de,p}(s, s')}{U_{\de,p}(s, s)-U_{\de,p}(s^*, s)}.
\end{eqnarray*}
The second term, $\frac{U_{\de,p}(s,s)-U_{\de,p}(s, s^*)}{U_{\de,p}(s, s)-U_{\de,p}(s^*, s)}$, is bounded by the the cross ratio condition.
The third term, $\frac{U_{\de,p}(s,s)-U_{\de,p}(s, s')}{U_{\de,p}(s, s)-U_{\de,p}(s^*, s)}$, is smaller than $\frac{U_{\de,p}(s,s)-U_{\de,p}(s, s')}{U_{\de,p}(s, s)-U_{\de,p}(s', s)}$ given that $U_{\de,p}(s, s)-U_{\de,p}(s^*, s)\geq U_{\de,p}(s, s)-U_{\de,p}(s', s)$. Hence, the third term is also bounded by the cross ratio condition. So, to finish, we have to bound the first term:
\begin{eqnarray*}
& & \frac{U_{\de,p}(s',s^*)+U_{\de,p}(s^*,s')-2U_{\de,p}(s,s)}{U_{\de,p}(s, s)-U_{\de,p}(s^*, s)}=\\
& & \frac{U_{\de,p}(s',s^*)-U_{\de,p}(s,s) +U_{\de,p}(s^*,s')-U_{\de,p}(s,s)}{U_{\de,p}(s, s)-U_{\de,p}(s^*, s)}.
 \end{eqnarray*}
The bound is going to follow from the fact that $s$ is an uniformly efficient perfect public equilibrium.

We start by calculating $U_{\de,p}(s',s^*)-U_{\de,p}(s,s)$ and $U_{\de,p}(s^*,s')-U_{\de,p}(s,s).$ As in the previous section we do this by focusing on the histories of first difference between strategies, with the difference that now we have to keep track of the behavior of three different strategies.

Let us consider three strategies $s,$ $s^*,$and $s'$ and let 
$$\widehat \cE_{s,s'}:= \{h_t\in H: s(\hat h_t)=s'(\hat h_t) \,\mbox{and} \,s(\hat h_\tau)=s'(\hat h_\tau) \forall h_\tau \,\mbox{such that}\, h_t = h_\tau h_{t-\tau} \},$$
$$C_{s,s^*,s'}:=\cR_{s,s^*} \cap ( \widehat \cE_{s,s'} \cup \widehat \cR_{s,s'} ),$$
$$\widehat C_{s,s^*,s'}:=\widehat \cR_{s,s'} \cap \cE_{s,s^*},$$
where $\cR_{s,s'},$ $\widehat \cR_{s,s'},$and $\cE_{s,s'}$ are as defined in the previous section.
Note that $C_{s,s^*,s'}$ is the set of histories in which $s^*$ differs for the first time from $s$ and $s'$ has not differed from $s$ before that history. $\widehat C_{s,s^*,s'}$ is the set of histories such that $s^*$ has not differed from $s$ before or in that history and $s'$ differs for the first time from $s$ in the ``mirror'' of that history ($\hat h_t$).
 
Arguing as in Lemmas \ref{calc eq paths} and \ref{calc eq paths hat}, it follows that:
\begin{eqnarray*}
 U_{\de,p}(s^*,s')-U_{\de,p}(s,s)= & & 
 \sum_{ h_t \in C_{s,s^*,s'}}\,\de^t p_{s,s}(h_t)(\,U_{\de,p}(s^*,s'/h_t)-U_{\de,p}(s,s/h_t)) \\
 & & + \sum_{ h_t \in \widehat C_{s,s^*,s'}}\,\de^t p_{s,s}(h_t)(\,U_{\de,p}(s^*,s'/h_t)-U_{\de,p}(s,s/h_t)).
\end{eqnarray*}
Similarly,
\begin{eqnarray*}
 U_{\de,p}(s',s^*)-U_{\de,p}(s,s)= & & 
 \sum_{ h_t \in C_{s,s^*,s'}}\,\de^t p_{s,s}(h_t)(\,U_{\de,p}(s',s^*/\hat h_t)-U_{\de,p}(s,s/\hat h_t)) \\
 & & + \sum_{ h_t \in \widehat C_{s,s^*,s'}}\,\de^t p_{s,s}(h_t)(\,U_{\de,p}(s',s^*/\hat h_t)-U_{\de,p}(s,s/\hat h_t)).
\end{eqnarray*}
Therefore, we get that
\begin{eqnarray} \label{numeradordecomposed}
& & U_{\de,p}(s^*,s')+U_{\de,p}(s',s^*)-U_{\de,p}(s,s)-U_{\de,p}(s,s)=  
\end{eqnarray}
\begin{eqnarray*} 
& & \sum_{ h_t \in C_{s,s^*,s'}}\,\de^t p_{s,s}(h_t)(\,U_{\de,p}(s^*,s'/h_t)+U_{\de,p}(s',s^*/\hat h_t)-U_{\de,p}(s,s/h_t)-U_{\de,p}(s,s/\hat h_{\hat t^*_h})) \\
 & & +\sum_{ h_t \in \widehat C_{s,s^*,s'}}\,\de^t p_{s,s}(h_t)(\,U_{\de,p}(s^*,s'/h_t)+U_{\de,p}(s',s^*/\hat h_t)-U_{\de,p}(s,s/h_t)-U_{\de,p}(s,s/\hat h_t)).
\end{eqnarray*}
 Since $T+S < 2R$, $U_{\de,p}(s^*,s'/h_t)+U_{\de,p}(s',s^*/\hat h_t)\leq 2R$, and it follows that
\begin{eqnarray*}
& &  U_{\de,p}(s^*,s'/h_t)+U_{\de,p}(s',s^*/\hat h_t)-U_{\de,p}(s,s/h_t)-U_{\de,p}(s,s/\hat h_t) <\\
& &< 2R- U_{\de,p}(s,s/h_t)-U_{\de,p}(s,s/\hat h_t) .
\end{eqnarray*}
Since $(s,s)$ is uniformly efficient, it follows that $R- U_{\de,p}(s,s/h_t)< C_1(1-\de p^2)$. Hence, 
\begin{eqnarray}\label{t*}U_{\de,p}(s^*,s'/h_t)+U_{\de,p}(s',s^*/\hat h_t)-U_{\de,p}(s,s/h_t)-U_{\de,p}(s,s/\hat h_t)  <2C_1(1-\de p^2).
\end{eqnarray}
Now, recalling that $U_{\de,p}(s, s)-U_{\de,p}(s^*, s)$ is the sum of all payoff's difference starting at all first deviation histories and since $s$ is a perfect public equilibrium then $U_{\de,p}(s,s/h_t)-U_{\de,p}(s^*,s/h_t)\geq0$ for any $h_t$, it follows that 
\begin{eqnarray}\label{ss*}U_{\de,p}(s, s)-U_{\de,p}(s^*, s) \geq \sum_{ h_t \in C_{s,s^*,s'}} \de^t p_{s,s}(h_t)(\,U_{\de,p}(s,s/h_t)-U_{\de,p}(s^*,s/h_t)).
 \end{eqnarray}
Similarly,
$$ U_{\de,p}(s, s)-U_{\de,p}(s', s) \geq \sum_{ h_t \in \widehat C_{s,s^*,s'} }\,\de^{t} p_{s,s}(h_t)(\,U_{\de,p}(s,s/ \hat h_t)-U_{\de,p}(s',s/ \hat h_t)).$$

Hence, from the fact that $U_{\de,p}(s, s)-U_{\de,p}(s^*, s)\geq U_{\de,p}(s, s)-U_{\de,p}(s', s)$, it follows that 
\begin{eqnarray}\label{ss'}
 U_{\de,p}(s, s)-U_{\de,p}(s^*, s)\geq \sum_{ h_t \in \widehat C_{s,s^*,s'}}\,\de^t p_{s,s}(h_t)(\,U_{\de,p}(s,s/\hat h_t)-U_{\de,p}(s',s/\hat h_t)).
\end{eqnarray} 

By inequality (\ref{numeradordecomposed}) and the fact that $U_{\de,p}(s, s)-U_{\de,p}(s^*, s)\geq U_{\de,p}(s, s)-U_{\de,p}(s', s)$, it follows that:
\begin{eqnarray*}
 & & \frac{U_{\de,p}(s',s^*)+U_{\de,p}(s^*,s')-U_{\de,p}(s, s)-U_{\de,p}(s, s)}{U_{\de,p}(s, s)-U_{\de,p}(s^*, s)}\leq\\
& & \frac{\sum_{ h_t \in C_{s,s^*,s'}}\de^t p_{s,s}(h_t)(U_{\de,p}(s^*,s'/h_t)+U_{\de,p}(s',s^*/\hat h_t)-U_{\de,p}(s,s/h_t)-U_{\de,p}(s,s/\hat h_t))}{U_{\de,p}(s, s)-U_{\de,p}(s^*, s)} \\
 & & + \frac{\sum_{ h_t \in \widehat C_{s,s^*,s'}}\de^t p_{s,s}(h_t)(U_{\de,p}(s^*,s'/h_t)+U_{\de,p}(s',s^*/\hat h_t)-U_{\de,p}(s,s/h_t)-U_{\de,p}(s,s/\hat h_t))}{U_{\de,p}(s, s)-U_{\de,p}(s', s)}
\end{eqnarray*}
Then, by inequalities (\ref{ss*}) and (\ref{ss'}) it follows that:
\begin{eqnarray*}
 & & \frac{U_{\de,p}(s',s^*)+U_{\de,p}(s^*,s')-U_{\de,p}(s, s)-U_{\de,p}(s, s)}{U_{\de,p}(s, s)-U_{\de,p}(s^*, s)}\leq\\
& & \frac{\sum_{ h_t \in C_{s,s^*,s'}}\de^t p_{s,s}(h_t)(U_{\de,p}(s^*,s'/h_t)+U_{\de,p}(s',s^*/\hat h_t)-U_{\de,p}(s,s/h_t)-U_{\de,p}(s,s/\hat h_t))}{ \sum_{ h_t \in C_{s,s^*,s'}} \de^t p_{s,s}(h_t)(U_{\de,p}(s,s/h_t)-U_{\de,p}(s^*,s/h_t))   } \\
 & & + \frac{\sum_{ h_t \in \widehat C_{s,s^*,s'}}\de^t p_{s,s}(h_t)(U_{\de,p}(s^*,s'/h_t)+U_{\de,p}(s',s^*/\hat h_t)-U_{\de,p}(s,s/h_t)-U_{\de,p}(s,s/\hat h_t))}{ \sum_{ h_t \in \widehat C_{s,s^*,s'}}\de^t p_{s,s}(h_t)(U_{\de,p}(s,s/\hat h_t)-U_{\de,p}(s',s/\hat h_t))}
\end{eqnarray*}

From inequality (\ref{t*}), and the fact that $s$ is an uniformly strict perfect equilibrium, it follows that the last two terms in previous set of inequalities are bounded by  
\begin{eqnarray*}
& & \leq \frac{\sum_{ h_t \in C_{s,s^*,s'}}\,\de^t p_{s,s}(h_t)2C_1(1-\de p^2)}{ \sum_{ h_t \in C_{s,s^*,s'}} \de^t p_{s,s}(h_t)C_0(1-\de p^2)  }  + \frac{\sum_{ h_t \in \widehat C_{s,s^*,s'}}\,\de^t p_{s,s}(h_t)2C_1(1-\de p^2)}{ \sum_{ h_t \in \widehat C_{s,s^*,s'}}\,\de^t p_{s,s}(h_t)C_0(1-\de p^2)}\\
& & \leq \frac{2C_1}{C_0}\frac{\sum_{ h_t \in C_{s,s^*,s'}}\,\de^t p_{s,s}(h_t)}{ \sum_{ h_t \in C_{s,s^*,s'}} \de^t p_{s,s}(h_t)  }  + \frac{2C_1}{C_0}\frac{\sum_{ h_t \in \widehat C_{s,s^*,s'}}\,\de^t p_{s,s}(h_t)}{ \sum_{ h_t \in \widehat C_{s,s^*,s'}}\,\de^t p_{s,s}(h_t)} =4\frac{C_1}{C_0},
\end{eqnarray*}
where the last equality follows observing that in each term of the sum the probability factor in the numerator and denominator are the same.
\qed

\subsection{Proofs for Section \ref{W banca tr subs}: strategies with uniformly large basins of attraction}\label{ap for sect6}

\subsubsection{Approximating payoffs for small probability of mistakes}
\label{calculating}

In this section, we show how to bound the cross ratio condition by focusing on 0-tremble histories (i.e. the histories that would be reached if $p=1$). This greatly simplifies verifying that the cross ratio conditions holds.   

From now on with $U_{\de, p }(h_{s,s'/h_t})$ we denote the discounted sum of utilities in $h_{s,s'/h_t}.$ With $U_{\de, p}( h^c_{s,s'/h_t})$  we denote $U_{\de, p}(s,s'/h_t)-U_{\de, p }(h_{s,s'/h_t})$, that is the discounted sum of utilities in histories outside of $h_{s,s'/h_t}.$ Also, with $\nee(h_t)$ we denote the  set of histories which are not in $h_{s,s'/h_t}$; those histories are called second order histories given $h_t$.

To simplify calculations we change the usual renormalization factor $1-\de$ by $\frac{1-p^2\de}{p^2}$ and we calculate the payoff as follows:
 $U_{\de,p}(s, s')= \frac{1-p^2\de}{p^2}\sum_{t\geq 0, a_t,b_t }   \de^t   p_{s,s}(a_t,b_t)u(a^t,b^t).$ 
Both ways calculating the payoff (either with renormalization $1-\de$ or  $\frac{1-p^2\de}{p^2}$) are equivalent as they rank histories in the same way.

In what follows, we restrict the probabilities of mistakes in relation to the discount factor such that $p>p(\de)$ where 
 \begin{eqnarray}\label{p}
p(\de)=max\Bigg\{ \sqrt{1-\frac{1}{16}\frac{C_0}{G}(1-\de)^2}, \sqrt{\de}, \sqrt{1+\frac{1}{5}(\de-1)}\Bigg\}
\end{eqnarray}
for some positive constant $G$ that depends on the payoff matrix of the stage game and the positive constant $C_0$ from Definition \ref{sgpstrict}. This restriction says that the probability of mistakes is much smaller than the impatience and it allows us to focus on utilities of 0-tremble histories. Moreover, we assume that $\de>\frac{1}{2}$.

\begin{teo}\label{estimating ***} If $(s,s)$ is an uniformly strict perfect public equilibrium and there exists $C_4$ and $\de_0$ such that
\begin{eqnarray}\label{main bound}
 \frac{U_{\de,p}(h_{s,s/ \hat h_t})-U_{\de,p}(h_{s,s'/\hat h_t})}{U_{\de,p}(h_{s,s/h_t})-U_{\de,p}(h_{s',s/h_t})}< C_4,
\end{eqnarray}
for any $h_t$, $s'$, $\de>\de_0$ and $p>p(\de)$, then $s$ satisfies the cross ratio condition.
\end{teo}

This theorem says that it is enough to focus on payoffs of 0-tremble histories to calculate that the cross ratio condition is satisfied. For the proof of this theorem, we first develop a series of lemmas.

\begin{lem}\label{comp1} For any pair of strategies $s, s'$ it follows that  
 $|U_{\de, p}( h^c_{s',s/h_t})|< \frac{1-p^2}{p^2 (1-\de)} G$ where $G=\max\{|T|, |S|\}.$  
\end{lem}

\begin{proof} Observe that, for a fixed $\tau>t$,   $\sum_{h_\tau \in H_\tau }  \,\,   p_{s,s'/h_t}(h_\tau)=1.$ Since in the $0-$tremble path at time $t$ the probability is $p^{2t+2}$, it follows that  $\sum_{h_\tau \in H_\tau \cap \nee(h_t)}\,\,     p_{s,s'/h_t}(h_\tau)=1-p^{2t+2}.$
Therefore, and recalling that $u(a^t,b^t)\leq G$ for any $(a^t,b^t)$ and that $(a_t,b_t)$ is a history including the outcome in period $t$:
\begin{eqnarray*}
& & |U_{\de, p}( h^c_{s',s/h_t})|= | \frac{1-p^2\de}{p^2}\sum_{\tau \geq t,(a_\tau,b_\tau) \in \nee(h_t)}\,\,  \de^{\tau-t}   p_{s,s' / h_t}(a_\tau,b_\tau)u(a^\tau,b^\tau)|\\
&\leq &\frac{1-p^2\de}{p^2}\sum_{\tau \geq t}\de^{\tau-t}\sum_{(a_\tau,b_\tau) \in \nee(h_t)} p_{s,s' / h_t}(a_\tau,b_\tau)|u(a^\tau,b^\tau)|\\
&\leq& \frac{1-p^2\de}{p^2}G \sum_{\tau \geq t}\de^{\tau-t} (1-p^{2(\tau-t)+2})\\
&=&\frac{1-p^2\de}{p^2}G[\sum_{\tau \geq t}\de^{\tau-t} - \sum_{\tau \geq t}\de^{\tau-t} p^{2(\tau-t)+2}]=\frac{1-p^2\de}{p^2}G[ \frac{1}{1-\de}- \frac{p^2}{1-p^2\de}]\\
&=&\frac{1-p^2}{p^2 (1-\de)}G.
\end{eqnarray*} \end{proof}

From now on, we denote
\begin{eqnarray*}
N_{\de,p}(s,s'):= U_{\de,p}(s,s)-U_{\de,p}(s',s),\\
\bar N_{\de,p}(s,s'):= U_{\de,p}(s,s)-U_{\de,p}(s,s'),
\end{eqnarray*}
and recalling equality (\ref{differences}) we define 
\begin{eqnarray*}
N^e_{\de,p}(s,s'):= \sum_{h_t\in \cR_{s,s'}}\,\de^{t} p_{s,s}(h_t)[\,U_{\de,p}(h_{s,s/h_t})-U_{\de,p}(h_{s',s/h_t})],\\
\bar N^e_{\de,p}(s,s'):= \sum_{h_t\in \widehat\cR_{s,s'}}\,\de^{t} p_{s,s}(h_t)[\,U_{\de,p}(h_{s,s/ h_t})-U_{\de,p}(h_{s,s'/h_t})].
\end{eqnarray*}

\begin{lem}\label{bounding n bar n b}
For any two strategies $s$ and $s'$, it holds that: 
\begin{enumerate} 
\item $N_{\de,p}(s,s')\geq N^e_{\de,p}(s,s')-2\frac{1-p^2}{p^2 (1-\de)} G;$
\item $N^e_{\de,p}(s,s')\geq N_{\de,p}(s,s')-2\frac{1-p^2}{p^2 (1-\de)} G;$
\item $\bar N_{\de,p}(s,s')\leq \bar N^e_{\de,p}(s,s')+2\frac{1-p^2}{p^2 (1-\de)} G;$
\end{enumerate}
\end{lem}

\begin{proof}
Directly by Lemma \ref{comp1}, the definitions of $N_{\de,p}(s,s'),$ $N^e_{\de,p}(s,s'),$ $\bar N_{\de,p}(s,s'),$ and $U_{\de,p}(h_{s,s'})$, and the fact that $\sum_{h_t\in \cR_{s,s'}}\,\de^{t} p_{s,s}(h_t)<1$ and $\sum_{h_t\in \widehat \cR_{s,s'}}\,\de^{t} p_{s,s}(h_t)<1.$
\end{proof}

\begin{lem}\label{lem-less1}
If $p>p(\de)$ and $\de>\frac{1}{2}$, then $\frac{1-\de}{(1-p^2\de)p^2}<2$. 
\end{lem}

\begin{proof}
Given that $\sqrt{\de}<p$, by condition (\ref{p}), and $\de>\frac{1}{2}$, it follows that $\frac{1}{p^2}<2$. Given that $p>1$, it holds that $\frac{1-\de}{(1-p^2\de)}<1$. The lemma follows from combining these two results. 
\end{proof}

\begin{lem}\label{lem-less2}
If $p>p(\de)$ and $\de>\frac{1}{2}$, then $2\frac{G(1-p^2)}{C_0(1-\de)(1-p^2\de)p^2}<\frac{1}{4}$. 
\end{lem}

\begin{proof}
By condition (\ref{p}), $p>\sqrt{1-\frac{C_0(1-\de)^2}{16G}}$. Rearranging, we have that $\frac{16G(1-p^2)}{C_0(1-\de)(1-p^2\de)}<1$, and using Lemma \ref{lem-less1}, $\frac{16G(1-p^2)}{C_0(1-\de)(1-p^2\de)p^2}<2$. The results follows from dividing both sides by 8.
\end{proof}

\begin{lem}\label{bounding ppe} If $(s,s)$ is a uniformly strict perfect public equilibrium with constant $C_0$, it holds that $N^e_{\de,p}(s,s')> \frac{3}{4}C_0(1-p^2\de)$, for every other strategy $s'$ and every history $h_t$ such that $s(h_t)\neq s'(h_t)$ when $p>p(\de)$ and $\de$ is sufficiently large.
\end{lem}

\begin{proof}
By Lemma \ref{bounding n bar n b}\textit{(ii)}, $N^e_{\de,p}(s,s')\geq N_{\de,p}(s,s')-2\frac{1-p^2}{p^2 (1-\de)} G$. Taking $N_{\de,p}$ as a common factor we have that $N^e_{\de,p}(s,s')\geq N_{\de,p}(s,s')[1-2\frac{(1-p^2)G}{p^2 (1-\de)N_{\de,p}(s,s')}]$. By $(s,s)$ being a uniformly strict perfect public equilibrium, we have that $N_{\de,p}(s,s')>C_0(1-p^2\de)$, and hence $N^e_{\de,p}(s,s')> C_0(1-p^2\de)[1-2\frac{(1-p^2)G}{p^2 (1-\de)C_0(1-p^2\de)}]$. Hence, by Lemma \ref{lem-less2}, $N^e_{\de,p}(s,s')> \frac{3}{4}C_0(1-p^2\de)$.
\end{proof}

Based on the previous lemma, the cross ration condition can be bounded by a similar condition which focuses on payoffs in the 0-tremble histories.

 \begin{lem}\label{bounding n bar n b 2}If $(s,s)$ is a uniformly strict perfect public equilibrium, then $\frac{\bar N_{\de,p}(s,s')}{N_{\de,p}(s,s')}\leq \frac{3}{2}\frac{\bar N^e_{\de,p}(s,s')}{N^e_{\de,p}(s,s')}+\frac{1}{2}$ for $p>p(\de)$ and sufficiently high $\de$. 
 \end{lem}

\begin{proof} From Lemma \ref{bounding n bar n b} \textit{(i)} and \textit{(iii)}, and taking $N^e_{\de p}(s,s')$ as common factor, it follows that $\frac{\bar N_{\de,p}(s,s')}{N_{\de,p}(s,s')} \leq \frac{\bar N^e_{\de,p}(s,s')+ 2\frac{1-p^2}{p^2 (1-\de)} G}{N^e_{\de,p}(s,s')(1-2\frac{1-p^2}{p^2 (1-\de)} G \frac{1}{N^e_{\de p}(s,s')})}.$ Then, by $(s,s)$ being a uniformly strict perfect public equilibrium and Lemma \ref{bounding ppe}, it follows that
$\frac{\bar N_{\de,p}(s,s')}{N_{\de,p}(s,s')} < \frac{\bar N^e_{\de,p}(s,s')+ 2\frac{1-p^2}{p^2 (1-\de)} G}{N^e_{\de,p}(s,s')(1-2G\frac{1-p^2}{(1-\de)p^2 \frac{3}{4}C_0 (1-p^2\de)})}.$ Hence, by Lemma \ref{lem-less2} and Lemma \ref{bounding ppe} again, it follows that
$\frac{\bar N_{\de,p}(s,s')}{N_{\de,p}(s,s')} < \frac{3}{2}\frac{\bar N^e_{\de,p}(s,s')}{N^e_{\de,p}(s,s')}+\frac{1}{2}.$
\end{proof}

{\em Proof of Theorem \ref{estimating ***}:} From Lemma \ref{bounding n bar n b 2}, we need to bound 
$$\frac{\bar N^e_{\de,p}(s,s')}{N^e_{\de,p}(s,s')} = \frac{\sum_{h_t\in \widehat\cR_{s,s'}}\,\de^{t} p_{s,s}(h_t)[\,U_{\de,p}(h_{s,s/ h_t})-U_{\de,p}(h_{s,s'/h_t})]}{\sum_{h_t\in \cR_{s,s'}}\,\de^{t} p_{s,s}(h_t)[\,U_{\de,p}(h_{s,s/h_t})-U_{\de,p}(h_{s',s/h_t})]}.$$

Noting that $h_t \in \widehat\cR_{s,s'}$ if and only if $\hat h_t \in \cR_{s,s'}$, we can modified the summation in the numerator so that the summation set coincides with the one in the denominator:
\begin{eqnarray*}
& & \frac{\bar N^e_{\de,p}(s,s')}{N^e_{\de,p}(s,s')}=\frac{\sum_{h_t\in \cR_{s,s'}}\de^tp_{s,s}(h_t)(U_{\de, p}(h_{s,s/\hat h_t})-U_{\de, p}(h_{s,s'/\hat h_t}))}{\sum_{h_t\in \cR_{s,s'}}\de^tp_{s,s}(h_t)(U_{\de, p}(h_{s,s/ h_t})-U_{ \de, p}(h_{s',s/ h_t})) }\\
& & = \frac{\sum_{h_t \in \cR_{s,s'}}\de^t p_{s,s}(h_t)\frac{U_{\de, p}(h_{s,s/\hat h_t})-U_{\de, p}(h_{s,s'/\hat h_t})}{U_{ \de, p}(h_{s,s/ h_t})-U_{ \de, p}(h_{s',s/ h_t})}(U_{ \de, p}(h_{s,s/ h_t})-U_{\de, p}(h_{s',s/ h_t}))}{\sum_{h_t \in \cR_{s,s'}}\de^t p_{s,s}(h_t)(U_{ \de, p}(h_{s,s/ h_t})-U_{ \de, p}(h_{s',s/ h_t}))}.
\end{eqnarray*}
Hence, by condition (\ref{main bound}) in the statement of the theorem, it follows that
$$\frac{\bar N^e_{\de,p}(s,s')}{N^e_{\de,p}(s,s')}<C_4 \frac{\sum_{h_t\in \cR_{s,s'}}\de^tp_{s,s}(h_t)(U_{\de, p}(h_{s,s/h_t})-U_{\de, p}(h_{s',s/h_k}))}{\sum_{h_t in \cR_{s,s'}}\de^tp_{s,s}(h_t)(U_{\de, p}(h_{s,s/h_t})-U_{\de, p}(h_{s',s/h_t}))}=C_4.$$
\qed

Before being able to prove Theorem \ref{* type ULBA} we need to deal with two additional approximation of payoffs. First, we prove that the sum of discounted payoffs on the 0-tremble path for a uniformly efficient strategy playing against itself is close to $R$. As an intermediate step, we show next that this is also the case for the sum of discounted payoffs over all paths.

\begin{lem}\label{modeff} If $(s,s)$ is an uniformly efficient strategy, then for any $h_t$ it follows that $|R-U_{\de,p}(s,s/h_t)|< C_1(1-p^2\de)$ for $p>p(\de)$ and sufficiently high $\de$.
\end{lem}

\begin{proof} Note that for any pair of strategies $(s,s')$, the vector of discounted sum of payoffs $(U_{\de,p}(s,s'/h_t),U_{\de,p}(s',s/\hat h_t))$ must belong to the convex combination of the four possible payoff realizations: $(R,R)$, $(P,P)$, $(T,S)$, and $(S,T)$. The fact that $s$ is a uniformly efficient strategy implies that
$R-U_{\de,p}(s,s/\hat h_t)< C_1(1-p^2\de)$. Hence, given the previous comment about the set of possible payoffs, it follows that $U_{\de,p}(s,s/h_t)-R< C_1(1-p^2\de)\frac{T-R}{R-S}.$ Given that $2R>T+S$, it follows that $U_{\de,p}(s,s/h_t)-R< C_1(1-p^2\de)$ and the lemma follows given that $R-U_{\de,p}(s,s/h_t)< C_1(1-p^2\de)$ by the definition of a uniformly efficient strategy.   
\end{proof}

In other words, while the definition of an uniformly efficient strategy says that the sum of discounted payoffs cannot be too low relative to R, this also implies that they cannot be too high. The reason is that the possible vectors of discounted payoffs are limited to the convex combination of the possible four realizations of payoffs. As can be seen in Figure \ref{fig:eff}, if $U_{\de,p}(s,s/h_t)$ is not much lower than $R$, then $U_{\de,p}(s,s/\hat h_t)$ cannot be much higher than $R.$ 

\begin{figure}[!htbpp]
\begin{center}
\includegraphics[width=1\textwidth]{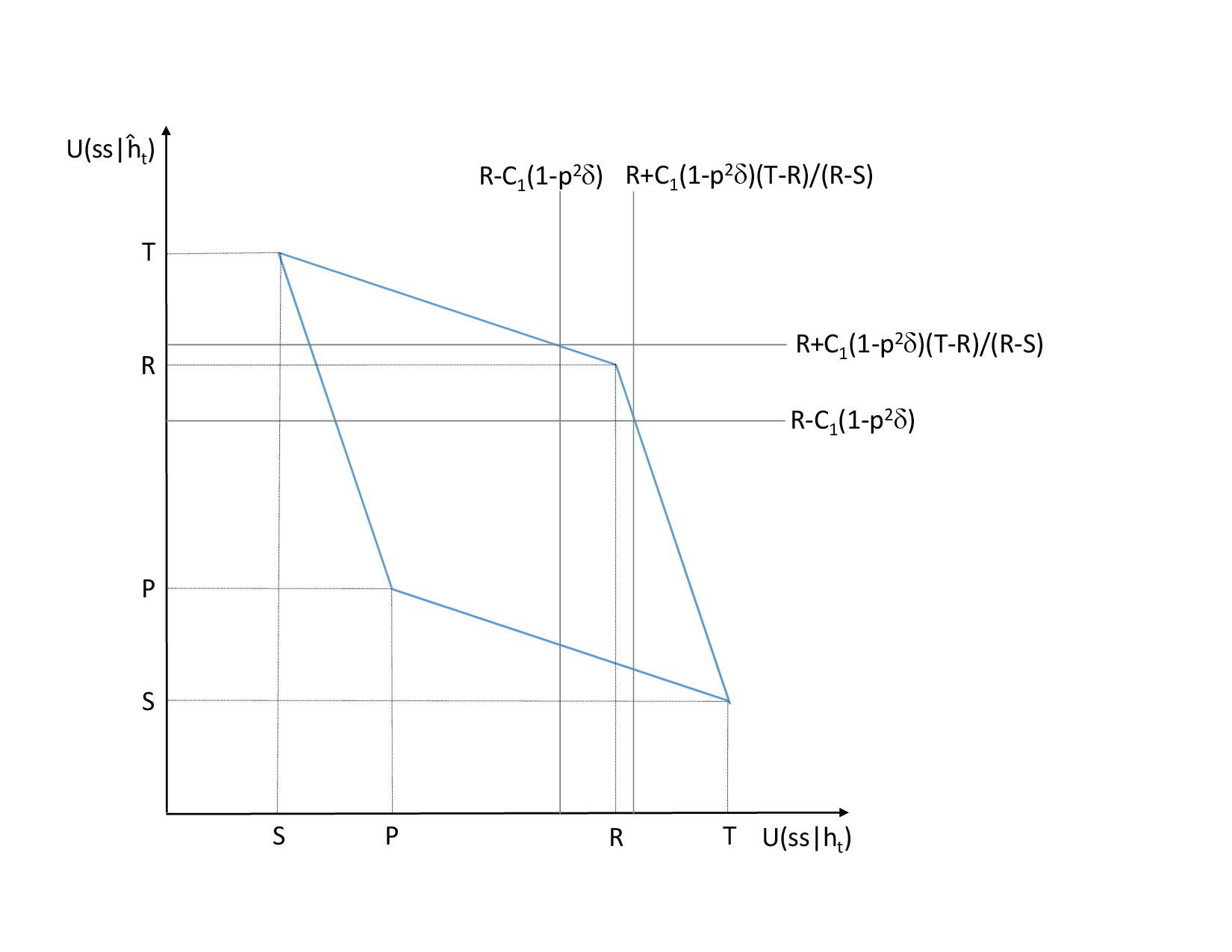}
 \end{center}
\caption{Uniform efficiency and possible payoffs}
\label{fig:eff}
\end{figure}

\begin{lem}\label{fr of C} If $(s,s)$ is an uniformly efficient strategy, then for any $h_t$ it follows that $|R-U_{\de,p}(h_{s,s/h_t})|< (C_1+G)(1-p^2\de)$ for $p>p(\de)$ and sufficiently high $\de.$ 
\end{lem}

\begin{proof} From the fact that $s$ is a uniformly efficient strategy and Lemma \ref{modeff}, $|R-U_{\de,p}(s,s/h_t)|< C_1(1-p^2\de).$ Hence, by $U_{\de,p}(s,s/h_t)=U_{\de,p}(h_{s,s/h_t})+U_{\de,p}(h^c_{s,s/h_t})$ and Lemma \ref{comp1}, it follows that 
$|R-U_{\de,p}(h_{s,s/h_t})|< C_1(1-p^2\de)+G\frac{1-p^2}{p^2 (1-\de)}.$
Given condition (\ref{p}), it can be shown that $\frac{1-p^2}{p^2 (1-\de)}<1-p^2\de$, and it follows that $|R-U_{\de,p}(h_{s,s/h_t})|<(C_1+G)(1-p^2\de).$
\end{proof}

Before moving to the proof of Theorem \ref{* type ULBA}, we deal with one last approximation of payoffs. We show that the loss in payoffs in the 0-tremble path from choosing another strategy is uniformly bounded away from zero if the strategy is a uniformly strict perfect public equilibrium.

\begin{lem}\label{aprox usppe} If $(s,s)$ is a uniformly strict perfect public equilibrium, then $U_{\de,p}(h_{s,s/h_t})-U_{\de,p}(h_{s',s/h_t}) > \frac{3}{4}C_0(1-p^2\de)$ for any $h_t$ and $s'$ such that $s(h_t)\neq s'(h_t)$ when $p>p(\de)$ and $\de$ is sufficiently high.
\end{lem}

\begin{proof} 
Given that $U_{\de,p}(s,s/h_t)=U_{\de,p}(h_{s,s/h_t})+U_{\de,p}(h^c_{s,s/h_t})$ and Lemma \ref{comp1}, it follows that $U_{\de,p}(h_{s,s/h_t})-U_{\de,p}(h_{s',s/h_t}) \geq U_{\de,p}(s,s/h_t)-U_{\de,p}(s',s/h_t)-2\frac{1-p^2}{p^2 (1-\de)}G$. The right hand side of this inequality can be written as $U_{\de,p}(s,s/h_t)-U_{\de,p}(s',s/h_t)[1-2\frac{(1-p^2)G}{p^2 (1-\de)(U_{\de,p}(s,s/h_t)-U_{\de,p}(s',s/h_t))}.$ By $(s,s)$ being a uniformly strict perfect public equilibrium, we have that $U_{\de,p}(s,s/h_t)-U_{\de,p}(s',s/h_t)>C_0(1-p^2\de)$ for any $h_t$ and $s'$ such that $s(h_t)\neq s'(h_t).$ Hence, $U_{\de,p}(h_{s,s/h_t})-U_{\de,p}(h_{s',s/h_t})> C_0(1-p^2\de)[1-2\frac{(1-p^2)G}{p^2 (1-\de)C_0(1-p^2\de)}]$. And so, by Lemma \ref{lem-less2}, $U_{\de,p}(h_{s,s/h_t})-U_{\de,p}(h_{s',s/h_t})> \frac{3}{4}C_0(1-p^2\de)$.
\end{proof}

\subsubsection{Proof of Theorem \ref{* type ULBA}: A star-type strategy has a uniformly large basin of attraction}

As a start-type strategy is uniformly efficient perfect public equilibrium, by Theorem \ref{2 implica ULBC}, it is enough that we prove that a star-type strategy satisfies the cross ration condition. And by Theorem \ref{estimating ***}, it is enough to show that inequality (\ref{main bound}) holds; that is, there exists $C_4$ such that
\begin{eqnarray}
 \frac{U_{\de,p}(h_{s,s/ \hat h_t})-U_{\de,p}(h_{s,s'/\hat h_t})}{U_{\de,p}(h_{s,s/h_t})-U_{\de,p}(h_{s',s/h_t})}< C_4,
\end{eqnarray}
for any $h_t$, $s'$, $\de>\de_0$ and $p>p(\de)$.

So, given $s'$ and $h_t$ we calculate $\frac{U_{\de, p}(h_{s,s/\hat h_t})-U_{\de,p}( h_{s,s'/\hat h_t})}{U_{\de, p}(h_{s,s/ h_t})-U_{\de,p}( h_{s',s/\hat h_t})}$
 where $h_{s,s/h_t}$ is the $0-$tremble path for $s,s$ starting with $h_t$
and $h_{s,s'/h_t}$ is the $0-$tremble path for $s,s'$ starting with $h_t$ and idem with $\hat h_t.$
Let us denote 
$$r= U_{\de, p}(h_{s,s/ h_t})-U_{\de,p}( h_{s',s/ h_t}),\,\,\,\hat r= U_{\de, p}(h_{s,s/ \hat h_t})-U_{\de,p}( h_{s,s'/ \hat h_t}),$$ 
$$\alpha= \frac{ U_{\de, p}(h_{s,s/ h_t})}{R},\,\,\, \hat \alpha= \frac{ U_{\de, p}(h_{s,s/ \hat h_t})}{R}.$$
 Let $b_R, b_S, b_T, b_P$ be the quantities given in Definition \ref{frequencies}, and using that $b_R+b_S+b_T+b_P=1$ it follows that 
\begin{eqnarray*} 
 r&=&\alpha R- (b_R R + b_S S + b_T T + b_P P)\\
& =&  \alpha R- R + R- (b_R R + b_S S + b_T T + b_P P)\\
&=&(\alpha-1)R+b_S(R-S)+b_T(R-T)+b_P(R-P).
\end{eqnarray*}
Then, given that $\gamma_2=\frac{R-S}{T-R}$, $\gamma_4=\frac{R-P}{T-R}$ and that $b_S\gamma_2+b_P\gamma_4-b_T>-C_5(1-p^2\de)$ by condition (\ref{eq on b}) in the definition of star-type strategies, it follows that  
\begin{eqnarray*}
r&=&(\alpha-1)R+b_S(T-R)\frac{(R-S)}{T-R}+b_T(R-T)+b_P(T-R)\frac{(R-P)}{T-R}\\
& =& (\alpha-1)R+ b_S (T-R) \gamma_2+ b_P (T-R) \gamma_4+ b_T(R-T)\\
& =&  (\alpha-1)R + (b_S\gamma_2+b_P\gamma_4-b_T)(T-R) +b_TR\\
&> & (\alpha-1)R +  b_TR -C_5(1-p^2\de)(T-R).
\end{eqnarray*}
So, 
\begin{eqnarray*}
b_T R &<&  r+ C_5(1-p^2\de)(T-R)-(\alpha-1)R \leq  r+ C_5(1-p^2\de)(T-R)+|\alpha-1|R\\
& =&  r(1+\frac{C_5(1-p^2\de)(T-R)}{r}+\frac{|\alpha-1|}{r} R).
\end{eqnarray*} 
Since $r> \frac{3}{4}C_0(1-p^2\de)$, from $(s,s)$ being a uniformly strict perfect public equilibrium and Lemma \ref{aprox usppe}, and $|\alpha-1|< (C_1+G)(1-p^2\de)$, by Lemma \ref{fr of C}, it follows that
\begin{eqnarray}\label{bound r}
b_TR< r(1+\frac{4C_5}{3C_0} (T-R)+\frac{4(C_1+G)}{3C_0} R).
\end{eqnarray}
Next we calculate $\hat r$ and compare it with $r.$ Following the same steps as for the calculation of $r$, and recalling that $h_{s,s'/\hat h_t}=\widehat{ h_{s',s/h_t}}$, it follows that
$$ \hat r= (\hat \alpha-1)R + b_S(R-T)+b_T( R-S)+b_P(R-P).$$
Then, given that $b_P(R-P)=r-(\alpha-1)R+b_S(R-S)+b_T(R-T)$, it follows that
\begin{eqnarray*}
\hat r& =&  (\hat \alpha-1)R+ b_S(R-T)+b_T(R-S)+b_P(R-P)\\
& =&  (\hat \alpha-\alpha)R+b_S(R-T)+b_T(R-S)-[b_S(R-S)+b_T(R-T)] +r \\
&=&  (\hat \alpha-\alpha)R+ r + (b_T-b_S)(T-S)\\
&\leq&  (\hat \alpha-\alpha)R+ r+ b_T(T-S)= (\hat \alpha-\alpha)R+ r + b_T R \frac{T-S}{R}.
\end{eqnarray*}

Then, given that $\hat \alpha-\alpha< |\hat \alpha-1|+|1-\alpha|$ and inequality (\ref{bound r}), it follows that
\begin{eqnarray*}
\hat r &\leq &  (|\hat \alpha-1|+|1-\alpha|)R+ r+ r (1+\frac{4C_5}{3C_0} (T-R) +\frac{4(C_1+G)}{3C_0} R)\frac{T-S}{R}\\
& =&  r(\frac{(|\hat \alpha-1|+|1-\alpha|)R}{r}+1+(1+\frac{4C_5}{3C_0} (T-R) +\frac{4(C_1+G)}{3C_0} R)\frac{T-S}{R}).
\end{eqnarray*}

From the fact that $s$ is a uniformly efficient strategy, it follows that $|1-\hat \alpha|R< (C_1+G)(1-\de p^2), |1-\alpha|R<(C_1+G)(1-\de p^2) $ and recalling that $s$ is also a uniformly strict perfect public strategy, it follows that $r> \frac{3}{4}C_0(1-\de p^2)$ and therefore $\frac{|\hat \alpha-1|+|1-\alpha|}{r}< \frac{8(C_1+G)}{3C_0}.$ Hence, it follows that 
$\frac{\hat r}{r}\leq \frac{8(C_1+G)}{3C_0}+ 1+(1+\frac{4C_5}{3C_0} (T-R)+\frac{4(C_1+G)}{3C_0} R)\frac{T-S}{R}.$ \qed

\subsubsection{$w$ has a uniformly large basin of attraction: proof of Theorem \ref{w-teo}} \label{w teo proof}

Given Theorem \ref{* type ULBA} we only need to show that $w$ is a star-type strategy. We show first that $w$ satisfies the sufficiently responsive condition, i.e., inequality (\ref{eq on b}), to then show that $(w,w)$ is an uniformly strict perfect public equilibrium.

\emph{$w$ satisfies the sufficiently responsive condition:}

Given the behavior of strategy $w$, if an alternative strategy $s$ interacts with $w$ and defects when $w$ cooperates, then $s$ earns  $T$ in that period ($s$  plays $D$ and $w$ plays $C$), and in the next period $s$ gets
either $S$ or $P$ given that $w$ will play $D$. Therefore, if $I_2=\{\tau: u(h{^\tau}_{s,w/h_{t}})=S\},$ $I_3=\{\tau: u(h{^\tau}_{s,w/h_{t}})=T\},$ $I_4=\{\tau: u(h{^\tau}_{s,w/h_{t}})=P\},$ it follows that if $\tau \in I_3$ then $\tau+1\in I_2\cup I_4$ and so
\begin{eqnarray}\label{cond W}
 b_S+b_P & = & \frac{1-p^2\de}{p^2}\sum_{\tau \in I_2\cup I_4}\de^{\tau-t} p^{2(\tau-t)+2}= p^2\de \frac{1-p^2\de}{p^2} \sum_{\tau \in I_2\cup I_4}\de^{\tau-t-1} p^{2(\tau-t)}\\
& \geq &  p^2\de \frac{1-p^2\de}{p^2} \sum_{\tau-1\in I_3}\de^{\tau-t-1}p^{2(\tau-t-1)+2}= p^2\de b_T,
\end{eqnarray}
where $b_R, b_S, b_T, b_P$ are, as for Definition \ref{frequencies}, as follows:
 $$b_P=\frac{1-p^2\de}{p^2}\sum_{\tau: u(h{^\tau}_{s,w/h_{t}})=P}\,p^{2(\tau-t)+2} \de^{\tau-t},\,\,b_S=\frac{1-p^2\de}{p^2}\sum_{\tau: u(h{^\tau}_{s,w/h_{t}})=S}\, p^{2(\tau-t)+2}\de^{\tau-t},$$
$$b_T=\frac{1-p^2\de}{p^2}\sum_{\tau: u(h{^\tau}_{s,w/h_{t}})=T}\, p^{2(\tau-t)+2}\de^{\tau-t}.$$

Recalling that $2R> T+P>T+S$, it follows that $\ga_2=\frac{R-S}{T-R}$ and $\ga_3=\frac{R-P}{T-R}$ are both greater than one. Hence, $b_S\ga_2+b_P\ga_4> b_S+b_P\geq p^2\de b_T,$ and, for $\de$ and $p$ close to one, inequality (\ref{eq on b}) is satisfied.

\emph{The profile $(w,w)$ is an uniformly strict perfect public equilibrium:} \label{w is sgp}

To prove that $(w,w)$ is an uniformly strict perfect public equilibrium, it is enough to show that $ U_{\delta, p}(w,w/h_t)-U_{\delta, p}(s,w/h_t)>(1-p^2 \delta) C_0$ for every history $h_t$ such that $w(h_t)\neq s(h_t)$ with $C_0$ being a positive number.

We start by showing that the difference of payoffs are bounded in 0-tremble histories.

Let's focus first in the case with $w(h_t)=C$ and $s(h_t)=D$. By the fact that $R=b_R R+ b_S R+b_T R+ b_P R$, $b_R=1- b_S-b_T-b_P$ and by inequality (\ref{cond W}), it follows that 
\begin{eqnarray*} 
U_{\delta, p}(h_{w,w/h_t})-U_{\delta, p}(h_{s,w/h_t})&=& b_S(R-S)+b_T(R-T)+b_P(R-P)\\
&\geq & (b_S+b_P)(R-P)+b_T(R-T)\\
&\geq&   \de p^2 b_T (R-P)+ b_T(R-T)\\
&\geq &  b_T[(1+p^2\de)R-(T+P)].
\end{eqnarray*}
Given that $s(h_t)=D$ and $w(h_t)=C,$ strategy $s$ will obtain payoff $T$ at period $t$ and hence
$b_T\geq 1-p^2\de.$
Since $2R-(T+P)>0$, it follows that, for $\de$ and $p$ large,  $[(1+p^2\de)R-(T+P)]> 2C_0$ where $C_0$ is a positive constant such that $C_0=\min\{\frac{P-S}{4}, \frac{2R-(T+P)}{4}\}.$ Therefore, it follows that
$ U_{\delta, p}(h_{w,w/h_t})-U_{\delta, p}(h_{s,w/h_t})> (1-p^2\de)2C_0,$ provided that $\de$ and $p$ are large.

Let's focus now in the case with $w(h_t)=D$ and $s(h_t)=C$. Observe that in this case $b_S\geq 1-p^2\de$ and calculating again the quantities $b_R, b_S, b_T, b_P$ starting from $t+1$, we get that 
$U_{\delta, p}(h_{s,w/h_t})=(1-p^2\de) S+p^2\de[b_RR+b_SS+b_TT+b_PP].$ Therefore, 
writing $p^2\de R=p^2\de[b_R R+ b_S R+b_T R+ b_P R]$ and arguing as before, 
\begin{eqnarray*}
 & & U_{\delta, p}(h_{w,w/h_t})-U_{\delta, p}(h_{s,w/h_t})\\& &=(1-p^2\de)(P-S)+p^2\de[ b_S(R-S)+b_T(R-T)+b_P(R-P)]\\
& &\geq  (1-p^2\de)(P-S)+p^2\de[(b_S+b_P)(R-P)+b_T(R-T)]\\
& & \geq(1-p^2\de)(P-S)+ p^2\de[p^2\de b_T (R-P)+ b_T(R-T)]\\
& &\geq (1-p^2\de)(P-S)+p^2\de b_T [(1+p^2\de)R-(T+p^2\de P)].
\end{eqnarray*}
Since $2R-(T+P)>0,$ it follows that for $\de$ and $p$ large $U_{\delta, p}(h_{w,w/h_t})-U_{\delta, p}(h_{s,w/h_t})> (1-p^2\de)(P-S).$ Given that $C_0=\min\{\frac{P-S}{4}, \frac{2R-(T+P)}{4}\}$, this implies that $ U_{\delta, p}(h_{w,w/h_t})-U_{\delta, p}(h_{s,w/h_t})> (1-p^2\de)2C_0$ for $\de$ and $p$ large.

We now prove that bounded differences in payoffs in 0-tremble histories implies that the difference is also bounded when all histories are considered.

From condition (\ref{p}) on the minimum value of $p$, we have that $p > \sqrt{1-\frac{1}{16}\frac{C_0}{G}(1-\de)^2}$. Hence, $16G(1-p^2)<C_0(1-\de)^2$. Given that we have assumed that $\de>\frac{1}{2}$ and $p>\sqrt{\de}$ by condition (\ref{p}), it follows that $p^2>\frac{1}{2}$ and $\frac{2}{p^2}<16$. Hence, $\frac{2G(1-p^2)}{p^2(1-\de)}<C_0(1-\de)$, and given that $(1-\de)<(1-p^2\de)$ it follows that
\begin{eqnarray} \label{condi}
\frac{2G(1-p^2)}{p^2(1-\de)}<C_0(1-p^2\de).
\end{eqnarray} 
By Lemma \ref{comp1}, $U_{\delta, p}(w,w/h_t)-U_{\delta, p}(s,w/h_t)>U_{\delta, p}(h_{w,w/h_t})-U_{\delta, p}(h_{s,w/h_t})-\frac{2G(1-p^2)}{p^2(1-\de)}.$
Hence, by $U_{\delta, p}(h_{w,w/h_t})-U_{\delta, p}(h_{s,w/h_t})> (1-p^2\de)2C_0$, it follows that
$$U_{\delta, p}(w,w/h_t)-U_{\delta, p}(s,w/h_t)>(1-p^2\de)2C_0-\frac{2G(1-p^2)}{p^2(1-\de)}.$$
Then, by inequality (\ref{condi}), it follows that $U_{\delta, p}(w,w/h_t)-U_{\delta, p}(s,w/h_t)>(1-p^2\de)C_0,$
for $\de$ and $p$ large.

\subsubsection{$Tn$ has a uniformly large basin of attraction: proof of Theorem \ref{Tn-teo}}

Given Theorem \ref{* type ULBA}, we only need to show that $Tn$ is a star-type strategy. For this we need to prove that $Tn$ satisfies the sufficiently responsive condition, i.e., inequality (\ref{eq on b}), and that $(Tn,Tn)$ is an uniformly strict perfect public equilibrium. Proving the former can be done as for $w$. The reason is that, in the period right after the defection when cooperation was expected, $Tn$ will respond to a in the same way as $w$. Hence, the proof used for $w$ applies to $Tn$ as well. It remains to be shown that $(Tn,Tn)$ is an uniformly strict perfect public equilibrium for a sufficiently large $n$.

\emph{The profile $(Tn,Tn)$ is an uniformly strict perfect public equilibrium:} \label{Tn is sgp}

We need to show that $ U_{\delta, p}(Tn,Tn/h_t)-U_{\delta, p}(s,Tn/h_t)>(1-p^2 \delta) C_0$ for every history $h_t$ such that $Tn(h_t)\neq s(h_t)$ with $C_0$ being a positive number.

We start by showing that the difference of payoffs is bounded in 0-tremble histories.

Let's focus first in the case with $Tn(h_t)=C$ and $s(h_t)=D$. In this case, $n$ periods of punishment are triggered and, assuming that $s$ differs from $Tn$ only at $h_t$(by the one step deviation principle), it follows that
\begin{eqnarray*} 
U_{\delta, p}(h_{Tn,Tn/h_t})-U_{\delta, p}(h_{s,Tn/h_t})&=& \frac{1-p^2\de}{p^2}\left[p^2(R-T)+\sum_{i=1}^{n} p^{2(i+1)}\de^i (R-P) \right]
\end{eqnarray*}
Hence, for $p$ and $\de$ large:
\begin{eqnarray} \label{Tn-ine} 
U_{\delta, p}(h_{Tn,Tn/h_t})-U_{\delta, p}(h_{s,Tn/h_t})&=& (1-p^2\de) \frac{(n+1)R-T-nP}{4}.
\end{eqnarray}
Note that given that $R>P$, it is possible to chose $n$ sufficiently large so that $(n+1)R>T+nP$.

Define $C_0$ as a positive constant such that $C_0=\min\{\frac{P-S}{4}, \frac{(n+1)R-T-nP)}{4}\}.$ Therefore, from (\ref{Tn-ine}), it follows that
$ U_{\delta, p}(h_{Tn,Tn/h_t})-U_{\delta, p}(h_{s,Tn/h_t})> (1-p^2\de)2C_0,$ provided that $\de$ and $p$ are large.

We focus now in the case with $Tn(h_t)=D$ and $s(h_t)=C$. The best case for $s$ is now when they are in the first period of punishment. 

By the one step deviation principle we can assume that $s$ differs from $Tn$ only at $h_t$, and it follows that
\begin{eqnarray*} 
U_{\delta, p}(h_{Tn,Tn/h_t})-U_{\delta, p}(h_{s,Tn/h_t})&=& \frac{1-p^2\de}{p^2}\left[p^2(S-P)+p^{2(n+1)}\de^n (R-P) \right]\\
&\geq&(1-p^2\de)(P-S)
\end{eqnarray*}
Given the definition of $C_0$ it follows that $U_{\delta, p}(h_{Tn,Tn/h_t})-U_{\delta, p}(h_{s,Tn/h_t})\geq (1-p^2\de)2C_0$ for $\de$ and $p$ are large.

We prove next, following the same steps as in Section \ref{w teo proof}, that bounded differences in payoffs in 0-tremble histories imply that the differences are also bounded when all histories are considered.

From condition (\ref{p}) on the minimum value of $p$, we have that $p > \sqrt{1-\frac{1}{16}\frac{C_0}{G}(1-\de)^2}$. Hence, $16G(1-p^2)<C_0(1-\de)^2$. Given that we have assumed that $\de>\frac{1}{2}$ and $p>\sqrt{\de}$ by condition \ref{p}, it follows that $p^2>\frac{1}{2}$ and $\frac{2}{p^2}<16$. Hence, $\frac{2G(1-p^2)}{p^2(1-\de)}<C_0(1-\de)$, and given that $(1-\de)<(1-p^2\de)$ it follows that
\begin{eqnarray} \label{condi2}
\frac{2G(1-p^2)}{p^2(1-\de)}<C_0(1-p^2\de).
\end{eqnarray} 
By Lemma \ref{comp1}, $$U_{\delta, p}(Tn,Tn/h_t)-U_{\delta, p}(s,Tn/h_t)>U_{\delta, p}(h_{Tn,Tn/h_t})-U_{\delta, p}(h_{s,Tn/h_t})-\frac{2G(1-p^2)}{p^2(1-\de)}.$$
Hence, by $U_{\delta, p}(h_{Tn,Tn/h_t})-U_{\delta, p}(h_{s,Tn/h_t})> (1-p^2\de)2C_0$, it follows that
$$U_{\delta, p}(Tn,Tn/h_t)-U_{\delta, p}(s,Tn/h_t)>(1-p^2\de)2C_0-\frac{2G(1-p^2)}{p^2(1-\de)}.$$
Then, by inequality (\ref{condi2}), it follows that $U_{\delta, p}(Tn,Tn/h_t)-U_{\delta, p}(s,Tn/h_t)>(1-p^2\de)C_0,$ for $\de$ and $p$ large. \qed

\end{document}